      \theoremstyle{plain}
      \newtheorem{theorem}{Theorem}[section]
      \newtheorem{lemma}[theorem]{Lemma}
	  \newtheorem{proposition}[theorem]{Proposition}
      \theoremstyle{definition}
      \newtheorem{definition}[theorem]{Definition}      
      \theoremstyle{remark}
      \newtheorem{remark}[theorem]{Remark}
	\DeclareMathOperator{\tr}{tr}
      \def\@setcopyright{}
      \def\serieslogo@{}
\begin{document}
   \author{Kristina Schubert}
   \address{Inst. for Math. Stat., Univ. M\"unster, Orl\'{e}ans-Ring 10, 48149 M\"unster, Germany}
   \email{kristina.schubert@uni-muenster.de}

  \title[Random Matrices with Independent Skew-Diagonals]{Spectral Density for Random Matrices with Independent Skew-Diagonals}

 \begin{abstract}
We consider the empirical eigenvalue distribution of random real symmetric matrices with stochastically independent skew-diagonals and study its limit if the matrix size tends to infinity. We allow   correlations between entries on the same  skew-diagonal and we distinguish between two types of such correlations, a rather weak and a rather strong one. For weak correlations the limiting distribution is Wigner's semi-circle distribution; for strong correlations it is the free convolution of the semi-circle distribution and the limiting distribution for random Hankel matrices.
\end{abstract}

   \subjclass[2010]{60B20}




   \maketitle

\setcounter{equation}{0}

\section{Introduction}
Wigner's semi-circle law is possibly the most famous principle in random matrix theory. It states that, for various random matrix ensembles, the empirical eigenvalue distribution tends to a universal limit if the matrix size tends to infinity. 
The most prominent representatives of ensembles following this law are  the three classical Gaussian Ensembles (GOE, GUE and GSE) and the more general Wigner ensembles. The latter  maintain    
the independence of the matrix entries of the classical Gaussian ensembles, but allow other distributions than the normal distribution. 

The proof of Wigner's semi-circle law started with the  pioneering works of  Wigner himself \cite{Wig1955,Wig1958} and experienced many generalizations, e.g.~by  Arnold \cite{Arnold}. This proof was recently accomplished  for Wigner ensembles under rather mild regularity assumptions in a series of papers by Tao and Vu \cite{TaoVu,TaoVu2} and Erd{\H{o}}s et al.~(see  e.g.~the survey \cite{Erdos2}). 
The  law   states that  the empirical eigenvalue distribution   in Wigner ensembles converges weakly, in probability,   to a non-random distribution,  if the matrix size   tends to infinity. The limiting  distribution is then given by the  semi-circle distribution. 
 
The appearance of this distribution in the limit of large matrix sizes, regardless of the distribution of the matrix entries, hints at a phenomenon  often encountered in the study of random matrices, called universality. In general, the universality principle describes the fact that several limiting distributions of eigenvalue statistics do not depend on the details of the underlying random matrix ensemble.

However, one may ask to which extent the independence of the matrix entries is necessary for the limiting spectral density to be the semi-circle and how this limit changes if not all matrix entries are independent. 
Hence, matrices with a dependence structure of some kind have attracted attention over the last years  and were e.g.~studied in \cite{MKV,GNT,GT,Kirsch,KP,SSB}.  
 %
 
One possible approach to matrices with correlated real-valued entries is to allow that entries on the same (skew-)diagonal are correlated, while entries on different \mbox{(skew-)}diagonals are independent. The most distinct type of such correlations is met by random Toeplitz matrices $T_n$ and random Hankel matrices $H_n$, given by 
\begin{equation*}
T_n \coloneqq [X_{|i-j|}]_{1\leq i,j \leq n},
\quad H_n \coloneqq  [X_{i+j-1}]_{1\leq i,j \leq n}
\end{equation*}
for real-valued random variables $X_0,\ldots, X_{2n-1}$.
These matrices appear e.g.~as 
autocovariance matrices in time series analysis and as information
matrices in a polynomial regression model \cite{Bai}. The study of random Toeplitz and Hankel matrices was proposed by Bai  \cite{Bai} and later  addressed by Bryc, Dembo and Jiang   \cite{BDJ}. It was shown that if  $X_0,\ldots,X_{2n-1}$ have variance one, the  empirical eigenvalue distributions  of $T_n$ and $H_n$  converge weakly with probability one to  non-random limits. In particular, both limits differ from the semi-circle distribution and hence oppose the universality results for classical ensembles with independent entries.
Starting from the results for Toeplitz matrices, matrices with independent diagonals have been studied by Friesen and L\"owe \cite{FL2, FL1}. 

In this paper, we consider the empirical eigenvalue distribution of real symmetric random matrices with independent skew-diagonals instead of independent diagonals. We assume that  each matrix entry is centered with the same variance and that the $k$-th moments  are uniformly bounded for all matrix sizes. 
Our main results state that the empirical eigenvalue distribution converges weakly, with probability one, to a non-random distribution. 
Here, we distinguish two ensembles, one allowing for weak correlations, the other one allowing for strong  correlations.

By weak correlations, we mean that the covariance of two entries on the same skew-diagonal  depends on their distance only and   decays sufficiently fast. In this case, we show that the limiting spectral density is given by the semi-circle. 
When we consider  a type of rather strong correlations between entries on the same skew-diagonal, we  assume that  these correlations depend on the matrix size only and converge as the matrix size tends to infinity. 
Here, the limiting spectral distribution is   given by a combination  of the semi-circle distribution and the limiting distribution for Hankel matrices. Both distributions are obtained in special cases, where the correlation of two entries on the same skew-diagonal tends to zero or to one respectively. 
These results are analogue to the ones obtained for ensembles of  matrices with independent diagonals   in \cite{FL2,FL1}. 

In our proof,  we can follow the basic ideas of \cite{BDJ, FL2, FL1},  while several  technical difficulties arise for the current ensembles, preventing  the results of \cite{FL2,FL1} from being directly applicable. 
To gain insight into these  difficulties, we note that
the basic tool in the study of the expected empirical distribution, both in  the setting of \cite{FL2,FL1} and in the current setting, is the method of moments. Hence, for 
 an $n \times n$ random matrix $X_n=\frac{1}{\sqrt{n}}(a_n(i,j))_{1 \leq i,j \leq n}$ we need to consider the $k$-th  expected moment of the empirical eigenvalue distribution, which is given by
 \begin{eqnarray}
 \label{k_moment}
\frac{1}{n} \mathbb E \left[ \tr (X_n^k)\right]  
=\frac{1}{n^{\frac{k}{2} + 1}} \sum_{p_1,\ldots, p_k =1}^n \mathbb E\left[ a_n(p_1,p_2) a_n(p_2,p_3) \ldots a_n(p_k,p_1)\right].
\end{eqnarray}
To calculate (\ref{k_moment}), we have to consider the (in)dependence structure of the matrix entries: For the ensembles in \cite{FL2,FL1}  two entries $a_n(i,j)$ and $a_n(i',j')$ are stochastically independent if neither they nor $a_n(j,i)$ and $a_n(i',j')$ are on the same  diagonal, i.e. if 
\begin{equation}
\label{ind_FL}
|i-j|\neq|i'-j'|.
\end{equation} 
For the current ensembles with independent skew-diagonals, entries $a_n(i,j)$ and $a_n(i',j')$  are  stochastically independent if  
\begin{equation}
\label{ind_current}
i+j\neq i'+j'.
\end{equation} 
Although the defining relations (\ref{ind_FL}) and  (\ref{ind_current}) appear quite similar, the implications are more involved leading to two major difficulties. 

Firstly, a key observation for the calculation of (\ref{k_moment})  in  \cite{FL2,FL1} are the general equations $ \sum_{i=1}^k (p_i-p_{i+1}) =0$ (as $k+1$ is identified with 1) and $\sum_{i=l}^m (p_i-p_{i+1}) =p_l - p_{m+1}$.
 However, the appearance of the differences $p_i-p_{i+1}$  
makes these equations well applicable to  ensembles with independence structure (\ref{ind_FL}) in contrast to ensembles with independence structure (\ref{ind_current}).  This leads to the  most prominent difference, Lemma \ref{lemma_B_n}, which states that a certain quantity vanishes as the matrix size $n$ tends to infinity  rather than being zero for all $n$ as  in the analogue result in \cite{FL1}.

Secondly, the usual symmetry condition $a_n(i,j)=a_n(j,i)$ affects the matrix ensembles with independent diagonals and independent skew-diagonals in different ways. 
An $n \times n$ matrix is built from 
$n$ independent families of random variables in the first case (one for each diagonal in the upper triangular matrix) and from $2n-1$ independent families in the second case (one for the upper half of each skew-diagonal) respectively.
This apparent `higher degree of independence' does however not favor the analysis. Instead, when in the course of the calculations we consider pairs of matrix entries on the same skew-diagonal, say $a_n(i,j)$, $a_n(i+c, j-c)$,  we have to explicitly treat such pairs, where the symmetry of the matrix implies $a_n(i,j)=a_n(i+c, j-c)$, i.e. $c=j-i$ (see Lemma \ref{crossing_prop} onward).

For the convenience of the reader, we  follow the line of arguments presented in  \cite{FL2,FL1}. We adapt the proofs to the current ensembles and insert  new ideas when necessary. 
More detailed comments on the differences between the methods in  \cite{FL2,FL1} and the methods in this paper are given in Remark \ref{remark_diff_toeplitzcase}.

Matrices from the ensembles considered in this paper and in \cite{FL2,FL1} can actually be generated in several ways.  
Ensembles with weak  correlations along the (skew-)diagonals can e.g.~be built from independent families of stationary Gaussian Markov processes with mean zero and variance one. 
One can also fill the independent (skew-)diagonals with random variables from the Curie-Weiss model with inverse temperature $\beta>0$. These exhibit the required strong correlations and for $\beta>1$ the limiting law is the described combination of the semi-circle distribution and the limiting law for Toeplitz matrices or for Hankel matrices respectively. The  phase transition at $\beta=1$ in the Curie-Weiss model corresponds to the fact that for $\beta \leq 1$ the limiting law is the semi-circle distribution. Details on these examples can be found in  \cite{FL2,FL1}.

This paper is organized as follows: In Section \ref{sec_mainresult} we introduce our model of matrices with independent skew-diagonals and two different conditions for the correlations on the same skew-diagonal. Moreover, we state our main results (Theorem~\ref{MainTheorem_2} and  \ref{MainTheorem_1}) about the convergence of the empirical eigenvalue distribution. In Section \ref{sec_combinatoric}, following the ideas of  \cite{BDJ}, we introduce the notion of partitions to model the dependence structure of the matrix entries and derive an intermediate result for the expected $k$-th moment of the empirical eigenvalue distribution; this calculation is completed in Section \ref{sec_kthmoment}. In Section \ref{sec_completing_proofs} we extend the convergence of the expected $k$-th moment of the empirical eigenvalue distribution to the required weak convergence with probability one. In the case of strong correlations we further show some results for the limiting distribution. In particular, we show that the limiting distribution is the free convolution of the semi-circle distribution and the limiting distribution for Hankel matrices. 
%
\section{Main Results}\label{sec_mainresult}
We introduce our ensembles of random matrices and then state our main theorems. For $n \in \mathbb N$ we let $a_n(p,q)$ with $1 \leq p \leq q\leq n$ denote real random variables. 
We will consider the eigenvalues of the symmetric random $n \times n$ matrix $X_n$ obtained from $a_n(p,q)_{1 \leq p \leq q\leq n}$ by rescaling
			\begin{align*}
			X_n(p,q)&= \frac{1}{ \sqrt{n}} a_n(p,q), \quad  1\leq p\leq q\leq n,  
		\\ X_n(p,q)&=X_n(q,p), \quad  \quad \, \, 1\leq q< p	\leq n. 
			\end{align*}
We suppose that the random variables $a_n(p,q)$ are centered with variance one  and the $k$-th moments are uniformly bounded.
Moreover, we assume that the skew-diagonals are independent. 
Technically, these assumptions read:
      \begin{enumerate}
      \item[(A1)]  $$\displaystyle \mathbb E (a_n(p,q))=0, \quad \mathbb E ((a_n(p,q)^2)=1, \quad  1 \leq p\leq q\leq n $$
      \item[(A2)] $$\displaystyle m_k \coloneqq \sup_{n \in \mathbb N} \max_{1 \leq p\leq q \leq n} \mathbb E\left(|a_n(p,q)|^k \right
			)<\infty$$
      \item[(A3)] The families $\{ a_n(p,q): p+q=r\}$ are independent for $r=2,3,\ldots,2n.$ \\
      \end{enumerate}
We will consider two types of matrices with different conditions on the covariances of entries from the same skew-diagonal.  On the one hand, we assume that these covariances depend on the distance of the entries and decay sufficiently fast. On the other hand, we assume that  these covariances  are constant (only depending on $n$) and that they converge as $n\to \infty$. These conditions are 
       \begin{itemize}
       \item[(C1)]There exists a function  $c_n:\mathbb N \to\mathbb R$ such that \\
                     \begin{enumerate}
       \item[(i)] for $p\leq q, r\leq s$ with $p+q=r+s$ we have
       $$\quad  |\text{Cov}(a_n(p,q),a_n(r,s))| = c_n (|p-r|) =c_n(|q-s|).$$
       \item[(ii)] For $n \in \mathbb N$ we have  
			$$\displaystyle \sum_{\tau=0}^{n-1} c_n(\tau) =o(n).$$
       \end{enumerate}
       \item[(C2)] There exists $(c_n)_{n\in \mathbb N}$ such that  for all  $p,p',q,q' \in \{1,\ldots, n\}$ with $p+q=p'+q'$ and $(p,q)\notin \{ (p',q'), (q',p')\}$ we have:
       $$\text{Cov} (a_n(p,q), a_n(p',q'))=c_n.$$
			Moreover, the limit
       $ c\coloneqq \lim_{n \to \infty} c_n<\infty$
       exists.
       \end{itemize}
			\begin{remark}
			If condition (C2) is satisfied, we have 
			$0\leq c \leq 1$ (see Remark 2.1 in \cite{FL2}).
			Indeed, $0\leq c$  is a consequence of 
			$$ 0 \leq \mathbb V\left( \sum_{p=1}^n  a_n(p,p) \right)=n + n(n-1) c_n$$
			and $c_n\leq 1$ is a consequence of H\"older's inequality. 
			\end{remark}
For  the ordered eigenvalues of a matrix $X_n$, denoted by $\lambda_1^{(n)} \leq \ldots \leq \lambda_n^{(n)}$, we introduce the  empirical eigenvalue distribution
	\begin{equation*}
	\mu_n(
	X_n)\coloneqq\frac{1}{n} \sum_{k=1}^n \delta_{\lambda_k^{(n)}}.
	\end{equation*}
Our main theorems then state the weak convergence of $\mu_n$ under condition (C1) resp.~under (C2). If (C1) is satisfied, the limiting distribution is Wigner's semi-circle distribution.  
\begin{theorem}\label{MainTheorem_2}
Suppose that (A1), (A2), (A3) and (C1) are satisfied. Then, with probability one, $\mu_n$ converges weakly to the standard semi-circle distribution $\mu$ with density
\begin{equation}\label{semi_circ}
\frac{d\mu}{dx}\frac{1}{2\pi} \sqrt{4-x^2}\, \chi_{[-2,2]}(x).
\end{equation}
\end{theorem}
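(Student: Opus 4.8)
The plan is to use the method of moments. It suffices to establish two facts: first, that for every fixed $k \in \mathbb{N}$ the expected moments converge,
\[
\lim_{n\to\infty} \frac{1}{n}\,\mathbb{E}\left[\tr(X_n^k)\right] = m_k, \qquad m_{2\ell} = C_\ell \coloneqq \frac{1}{\ell+1}\binom{2\ell}{\ell}, \quad m_{2\ell+1}=0,
\]
where the $m_k$ are the moments of the standard semi-circle distribution $\mu$ in \eqref{semi_circ}; and second, that $\frac{1}{n}\tr(X_n^k)$ concentrates, say $\operatorname{Var}\!\left(\frac{1}{n}\tr(X_n^k)\right) = O(n^{-2})$. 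Granting these, the Borel--Cantelli lemma gives $\frac{1}{n}\tr(X_n^k) \to m_k$ almost surely for each $k$; since $\mu$ has compact support it is the unique probability measure with moments $(m_k)$, and the standard moment-method argument (as in \cite{BDJ}) upgrades almost-sure convergence of all moments to almost-sure weak convergence of $\mu_n$ to $\mu$.

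For the expected moments I would start from \eqref{k_moment}, reading the sum as running over closed paths $p_1 \to p_2 \to \cdots \to p_k \to p_1$ in $\{1,\dots,n\}$ (with $p_{k+1}=p_1$), the $i$-th step carrying the entry $a_n(p_i,p_{i+1})$, which lies on the skew-diagonal indexed by $p_i+p_{i+1}$. By (A3) the expectation factorizes over the groups of steps sharing a skew-diagonal value, and by (A1) any path in which some skew-diagonal value occurs exactly once contributes $0$. Following \cite{BDJ}, I would organize the surviving paths by a partition $\pi$ of $\{1,\dots,k\}$ with all blocks of size $\geq 2$, where $i\sim_\pi j$ iff $p_i+p_{i+1}=p_j+p_{j+1}$; the number of free indices of such a path is at most $\tfrac{k}{2}+1$, with equality only possible when $\pi$ is a pair partition. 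For a pair partition, a block $\{i,j\}$ imposes $p_i+p_{i+1}=p_j+p_{j+1}$, and, using the telescoping identities $\sum_{i=1}^k (p_i-p_{i+1})=0$ and $\sum_{i=\ell}^m (p_i-p_{i+1}) = p_\ell - p_{m+1}$, one checks that $\pi$ contributes at the maximal order $n^{k/2+1}$ only when $\pi$ is a \emph{non-crossing} pair partition. For such $\pi$, condition (C1)(ii) forces the two entries in each block to essentially coincide: configurations with $|p_i-p_j|=\tau>0$ carry the weight $c_n(\tau)$, and $\sum_{\tau} c_n(\tau)=o(n)$ renders them negligible, so only $p_i=p_j$ (covariance $1$) survives. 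The resulting count is exactly one configuration per non-crossing pair partition, yielding $C_\ell$ when $k=2\ell$ and $0$ when $k$ is odd.

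I expect the combinatorial bookkeeping of the previous paragraph to be the main obstacle, for the two reasons already highlighted in the introduction. First, one must show that all non-pair partitions, all crossing pair partitions, and all genuinely correlated pairings contribute $o(n^{k/2+1})$; because the skew-diagonal constraint involves the sums $p_i+p_{i+1}$ rather than the differences $|p_i-p_{i+1}|$, the degree-of-freedom count is more delicate than in the Toeplitz/Hankel analysis of \cite{FL1}, and the exact identity available there degrades into a statement that a certain quantity merely vanishes in the limit (the role of Lemma~\ref{lemma_B_n}). Second, the symmetry $a_n(p,q)=a_n(q,p)$ interacts awkwardly with the skew-diagonal structure: two entries $a_n(i,j)$ and $a_n(i+c,j-c)$ on the same skew-diagonal coincide by symmetry precisely when $c=j-i$, and these coincidences must be isolated and counted separately (cf.\ Lemma~\ref{crossing_prop} and the discussion following it) rather than being absorbed automatically.

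Finally, for the concentration statement I would expand $\operatorname{Var}\!\left(\tfrac{1}{n}\tr(X_n^k)\right)$ as a double sum over pairs of closed paths of length $k$. By (A3), a pair of paths contributes to the variance only when the two paths share at least one skew-diagonal, which links them and thereby costs at least two degrees of freedom relative to the factorized case; using (A2) to bound the finitely many entries involved, one obtains $\operatorname{Var}\!\left(\tfrac{1}{n}\tr(X_n^k)\right) = O(n^{-2})$, which is summable in $n$. Borel--Cantelli then delivers the required almost-sure convergence of each moment, completing the proof.
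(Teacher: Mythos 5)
Your overall architecture (moments, reduction to pair partitions, concentration plus Borel--Cantelli) matches the paper, but the mechanism you propose for the key step is wrong. You claim that a pair partition $\pi$ contributes at the maximal order $n^{\frac{k}{2}+1}$ \emph{only} when $\pi$ is non-crossing, arguing via the telescoping identities $\sum_i (p_i-p_{i+1})=0$. Those identities are adapted to the diagonal constraint $|p_i-p_{i+1}|=|p_j-p_{j+1}|$ of \cite{FL1}, not to the skew-diagonal constraint $p_i+p_{i+1}=p_j+p_{j+1}$; for the latter one has $\lim_n n^{-(\frac{k}{2}+1)}\#S_n(\pi)=p_H(\pi)$ (Lemma 4.8 of \cite{BDJ}), and $p_H(\pi)>0$ for crossing pair partitions --- this is precisely why Hankel matrices, which have exactly this independence structure, do \emph{not} obey the semi-circle law. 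So crossing pair partitions cannot be disposed of by counting alone. They must be killed by the hypothesis (C1)(ii): after stripping adjacent blocks one bounds $|\mathbb E[a_n(Q_1)\cdots a_n(Q_{k-2r})]|\le |\mathrm{Cov}(a_n(Q_i),a_n(Q_{i+j}))|$ for an innermost genuinely crossing block via H\"older, writes this covariance as $c_n(\tau)$, and uses $\sum_\tau c_n(\tau)=o(n)$ --- taking care that whether the relevant distance is $|q_i-q_{i+j}|$ or $|q_i-q_{i+j+1}|$ depends on the relative orientation of the two pairs (the ``positive/negative'' distinction forced by the symmetry $a_n(p,q)=a_n(q,p)$). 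Symmetrically, you have misplaced where (C1)(ii) enters: for a \emph{non-crossing} pair partition the closed-path structure plus matrix symmetry forces $p_i=p_{i+2}$ for every adjacent block, hence $a_n(P_i)=a_n(P_{i+1})$ literally, so each such expectation equals $1$ with no appeal to covariance decay; there is no family of ``$\tau>0$'' configurations to discard there.

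The concentration step also has a gap. Sharing a single skew-diagonal value between two closed paths imposes only \emph{one} linear constraint, so the linked configurations number $O(n^{k+1})$ against a normalization of $n^{k+2}$, giving $\operatorname{Var}\bigl(\tfrac1n\tr(X_n^k)\bigr)=O(n^{-1})$ rather than your claimed $O(n^{-2})$; and (C1) controls covariances of entries, not of their squares, so it does not rescue the estimate. Since $O(n^{-1})$ is not summable, Chebyshev with the variance does not feed Borel--Cantelli. The paper instead uses the fourth centered moment, $\mathbb E\bigl[(\tr(X_n^k)-\mathbb E\tr(X_n^k))^4\bigr]\le Cn^2$ (Proposition 4.9 of \cite{BDJ}, adapted as in \cite{FL1}), which yields a summable tail bound $C/(\varepsilon^4 n^2)$.
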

\begin{theorem}\label{MainTheorem_1}
Suppose that (A1), (A2), (A3) and (C2) are satisfied. Then, with probability one, $\mu_n$ converges weakly to a non-random probability measure $\nu_c$. The limiting measure $\nu_c$ does not depend on the distribution of the random variables $a_n(p,q)$. 
\end{theorem}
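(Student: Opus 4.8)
The plan is to establish Theorem~\ref{MainTheorem_1} by the method of moments, in three stages. First I would show that the expected moments $\frac1n\mathbb E[\tr(X_n^k)]$ converge, as $n\to\infty$, to a limit $M_k$ depending only on $k$ and on the number $c$ from (C2), and in particular not on the distributions of the $a_n(p,q)$. Second I would prove a variance estimate upgrading this to almost sure convergence of the random moments $\frac1n\tr(X_n^k)$. Third I would check that the sequence $(M_k)_{k\ge1}$ grows slowly enough to satisfy Carleman's condition, so that it is the moment sequence of a unique probability measure $\nu_c$; this $\nu_c$ is then forced to be the weak, almost sure, limit of $\mu_n$, and it is non-random and universal precisely because the $M_k$ are. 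Only these properties of $\nu_c$ are asserted in the theorem; its identification as the free convolution of the semi-circle law and the Hankel limit is a separate matter, carried out in Section~\ref{sec_completing_proofs}.

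For the first stage I would start from the trace expansion $(\ref{k_moment})$ and organise the sum over closed paths $p_1,\dots,p_k$ according to the partition $\pi$ of $\{1,\dots,k\}$ that records which edges $e_i=\{p_i,p_{i+1}\}$ lie on a common skew-diagonal, i.e.\ have equal values $p_i+p_{i+1}$; this is the combinatorial set-up of Section~\ref{sec_combinatoric}, following \cite{BDJ}. Independence of the skew-diagonals (A3) together with the centering (A1) forces every term whose partition has a singleton block to vanish, so only partitions with all blocks of size at least two survive, and the uniform moment bound (A2) lets one discard those with a block of size at least three because a lattice-point count shows they are of lower order in $n$. This leaves the pair partitions. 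For a pair partition each block $\{e_i,e_j\}$ either consists of two edges that coincide as unordered pairs --- which the symmetry $a_n(p,q)=a_n(q,p)$ forces in the situation isolated in Lemma~\ref{crossing_prop}, and which yields the factor $\mathbb E[a_n(\cdot)^2]=1$ --- or of two genuinely distinct entries on the same skew-diagonal, in which case (A1) and (C2) turn $\mathbb E[a_n(e_i)a_n(e_j)]$ into $c_n$. Counting the free lattice parameters $p_1,\dots,p_k$ subject to the constraints $p_i+p_{i+1}=p_j+p_{j+1}$ attached to the blocks shows that the pair partitions whose constraint system has a solution space of the maximal dimension $k/2+1$ contribute $\Theta(n^{k/2+1})$, whereas the remaining pair partitions and the subleading corrections vanish after the normalisation $n^{-k/2-1}$; this is exactly where Lemma~\ref{lemma_B_n} enters, and where the skew-diagonal relation $(\ref{ind_current})$ behaves worse than the Toeplitz relation $(\ref{ind_FL})$, so the error term is merely $o(1)$ rather than identically zero. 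Passing to the limit and using $c_n\to c$ then gives $\frac1n\mathbb E[\tr(X_n^k)]\to M_k$, with $M_k=0$ for odd $k$ (an odd set has no pair partition, and partitions with a block of size at least three are negligible) and, for even $k$, $M_k$ a finite combinatorial quantity depending only on $k$ and $c$.

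For the second stage I would bound the variance $\mathbb V\big(\frac1n\tr(X_n^k)\big)$ --- or, if it is cleaner, the fourth central moment --- by expanding into a pair of coupled closed paths. When the two paths use disjoint families of skew-diagonals the corresponding expectations factorise and cancel against the centering, so only configurations in which the two paths share a skew-diagonal survive, and a lattice-point count like the one above shows these contribute $O(n^{-2})$. Summing over $n$ and applying the Borel--Cantelli lemma yields $\frac1n\tr(X_n^k)\to M_k$ almost surely; intersecting the exceptional null sets over $k\in\mathbb N$, this holds for all $k$ on a single event of probability one.

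For the third stage, since the surviving partitions form a subclass of the pair partitions of $\{1,\dots,2k\}$ and the weight attached to each is a product of factors $1$ and $c$, all lying in $[0,1]$ by the Remark following (C2), one obtains $M_{2k}\le(2k-1)!!$, whence $\sum_k M_{2k}^{-1/(2k)}=\infty$ and $(M_k)_k$ is the moment sequence of a unique probability measure $\nu_c$; as the $M_k$ do not depend on the laws of the entries, neither does $\nu_c$. On the probability-one event from the second stage, $\int x^k\,d\mu_n(x)=\frac1n\tr(X_n^k)\to M_k=\int x^k\,d\nu_c(x)$ for every $k$, and, $\nu_c$ being moment-determinate, this forces $\mu_n$ to converge weakly to $\nu_c$, proving the theorem. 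The main obstacle is the first stage: showing that the non-extremal pair partitions, the blocks of size at least three, and the symmetry-forced coincidences of Lemma~\ref{crossing_prop} are all genuinely negligible in the limit under the relation $(\ref{ind_current})$ --- which is exactly the reason Lemma~\ref{lemma_B_n} must be stated as an asymptotic vanishing rather than an exact identity, in contrast with the analogous step in \cite{FL1} (compare Remark~\ref{remark_diff_toeplitzcase}).
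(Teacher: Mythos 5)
Your proposal follows essentially the same route as the paper: the same partition/moment-method reduction to pair partitions, the same $1$-versus-$c_n$ dichotomy for blocks handled via Lemma~\ref{crossing_prop} and Lemma~\ref{lemma_B_n}, the same Hankel-volume limit for $\#S_n(\pi)/n^{k/2+1}$, the same concentration-plus-Borel--Cantelli upgrade, and the same Carleman bound $M_{2k}\le (2k-1)!!$. The one caveat concerns your second stage: for these strongly correlated ensembles the variance of $\frac1n\tr(X_n^k)$ is in general only $O(n^{-1})$ (already visible for $k=2$, where a whole skew-diagonal fluctuates coherently), hence not summable, so you must indeed use the fourth central moment, as the paper does via the bound $\mathbb E\bigl[(\tr(X_n^k)-\mathbb E\,\tr(X_n^k))^4\bigr]\le Cn^2$.
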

If condition (C2) is satisfied,  we can show further results for  $v_c$.
\begin{theorem}
\label{theo_limiting_distr}
In the situation of Theorem \ref{MainTheorem_1}, the limiting measure $v_c,$ $0 \leq c \leq 1,$ 
is the free convolution of the measures $v_{0,1-c}$ and $v_{1,c}$, we write $v_c=v_{0,1-c} \boxplus v_{1,c}$. Here, $v_{0,1-c} $ denotes the rescaled semi-circle   with variance $1-c$ and  $v_{1,c}$ the rescaled measure for Hankel matrices $\gamma_H$ with variance c as derived in \cite{BDJ}.
Moreover, $v_c$ is a symmetric measure. If $c > 0$, $v_c$ 
has an unbounded support, and if $0 \leq  c < 1$, its density is smooth.
\end{theorem}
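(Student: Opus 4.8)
The plan is to establish Theorem \ref{theo_limiting_distr} in four stages, leveraging the moment convergence machinery developed for Theorem \ref{MainTheorem_1}. First I would identify the limiting moments: by the proof of Theorem \ref{MainTheorem_1}, the $2k$-th moment of $\nu_c$ is a polynomial in $c$ of the form $\sum_{j=0}^{k} c^{j}(1-c)^{k-j} N_{k,j}$, where $N_{k,j}$ counts certain pair partitions of $\{1,\ldots,2k\}$ according to how many pairs are of the ``Hankel-type'' (skew-diagonal, constant-covariance) kind versus the ``semi-circle-type'' (diagonal-matching) kind. The odd moments vanish. The key observation is that these combinatorial weights split multiplicatively over the two independent mechanisms, which is exactly the signature of free independence.

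The second and central step is to recognize this polynomial structure as the moment sequence of a free convolution. I would introduce two formal (or genuine) independent noncommutative random variables: $s$ with the semi-circle law of variance $1-c$ (so $\varphi(s^{2k}) = (1-c)^k C_k$ with $C_k$ the Catalan numbers, coming from non-crossing pair partitions), and $h$ with the Hankel limit law $\gamma_H$ rescaled to variance $c$, whose moments are $\varphi(h^{2k}) = c^{k} \int x^{2k}\, d\gamma_H(x)$ and are described in \cite{BDJ} by all pair partitions weighted by the volume of an associated polytope. Then $\varphi((s+h)^{2k})$ expands, using freeness, into a sum over interleavings of $s$- and $h$-blocks; the semi-circular variable contributes only through non-crossing pairings of its own positions while the free structure forces the mixed terms to organize precisely into the $N_{k,j}$ counts. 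The task is to check that the moment-by-moment expansion of $s \boxplus h$ reproduces $\sum_j c^{j}(1-c)^{k-j}N_{k,j}$; this is essentially the statement that ``semi-circle plus anything, taken freely, adds a Catalan-weighted layer,'' which one can verify either via the $R$-transform ($R_{\nu_c} = R_{v_{0,1-c}} + R_{v_{1,c}}$, and $R$ of a variance-$\sigma^2$ semi-circle is $\sigma^2 z$) or by direct moment bookkeeping matching the two independence sources in the matrix model.

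For the remaining assertions: symmetry of $\nu_c$ follows immediately because all odd moments vanish (equivalently, both $v_{0,1-c}$ and $v_{1,c}$ are symmetric and free convolution of symmetric measures is symmetric). Unboundedness of the support for $c>0$ I would get from $\gamma_H$ having unbounded support (shown in \cite{BDJ}): since $v_{1,c}$ then has unbounded support and free convolution with any probability measure cannot shrink the support to a bounded set — concretely, the $2k$-th moments of $\nu_c$ grow at least like $c^k \int x^{2k} d\gamma_H$, which grows faster than $C^k$ for every $C$ — the support of $\nu_c$ is unbounded. Smoothness of the density for $0 \le c < 1$ is the place where I expect the real work: here I would invoke the regularization theory for free convolution with a semi-circular element (Biane's results on free convolution with the semi-circle law), which guarantees that $\mu \boxplus \mathrm{semicircle}_{\sigma^2}$ has a real-analytic density on the interior of its support and, combined with the unbounded-support fact, that the support is all of $\mathbb{R}$ when $c<1$ so the density is smooth everywhere; one must check the hypotheses of Biane's theorem (which only requires the semi-circular part to have strictly positive variance, i.e. $1-c>0$) apply to $v_{1,c}$ as the ``base'' measure.

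The main obstacle will be the smoothness claim, because it requires more than the moment method: one needs the analytic machinery of subordination functions and the fact, due to Biane, that free convolution by a semi-circle of positive variance is a smoothing operation whose density is governed by an explicit analytic equation for the subordination function; verifying that $v_{1,c}$ (the Hankel limit, about which we only know moment-level information and unbounded support) satisfies the mild regularity hypotheses needed is the delicate point. A secondary but routine obstacle is making the formal-moment identification in step two rigorous, i.e. confirming that the matrix model's two independence mechanisms (independence across skew-diagonals plus the exact constant-covariance structure within a skew-diagonal) produce in the limit the freeness of $s$ and $h$ rather than mere classical independence — this is handled by tracking which pair partitions survive in the limit, exactly as in the computation behind Theorem \ref{MainTheorem_1}, so it reduces to bookkeeping already essentially done.
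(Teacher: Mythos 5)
Your proposal follows essentially the same route as the paper: symmetry from the vanishing odd moments, unbounded support from the lower bound $M_{2k}\ge c^{k}\int x^{2k}\,d\gamma_H$ together with the unboundedness of the Hankel moments (Proposition A.2 in \cite{BDJ}), the free-convolution identity by matching free cumulants, i.e.~$(1-c)^k\kappa_{2k}(\mu)+c^k\kappa_{2k}(\gamma_H)=\kappa_{2k}(v_c)$ (equivalently your $R$-transform bookkeeping, resting on Lemma A.4 of \cite{BDJ}), and smoothness from Biane's regularization results for free convolution with a semi-circle of positive variance $1-c>0$. The only slight inaccuracy is describing the coefficients $N_{k,j}$ as partition \emph{counts}: the $2k$-th moment is $\sum_{\pi}c^{\,k-h(\pi)}p_H(\pi)$, where the $p_H(\pi)$ are volumes of polytopes rather than integers, but this does not affect the argument.
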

\begin{remark}
Here,   neither the notion of free probability nor  of the  free convolution is  introduced and the reader is referred to \cite{NicaSpeicher} for details on this topic. 
We observe that the result of Theorem \ref{theo_limiting_distr}   is in good accordance with the semi-circle law and the results of \cite{BDJ}. Indeed, 
on the one hand, matrices from the GOE satisfy (C2) with $c=0$, indicating no dependence at all, and hence $v_0$ has to be the semi-circle distribution. On the other hand, random Hankel matrices satisfy (C2) with $c=1$ and thus we conclude that $v_1$ equals $\gamma_H$ in \cite{BDJ}. This also becomes apparent in our calculations of the moments of $v_c$, where it turns out that the moments of $v_0$ are exactly given by the moments of the semi-circle distribution and the moments of $v_1$ are given by the moments of $\gamma_H$ (see Remark \ref{Remark_lim_distr}).
\end{remark}
%
%
   \section{Preliminaries, Notation and Combinatorics} \label{sec_combinatoric}
   Following the suggestions of \cite{BDJ,FL2,FL1}, we will prove Theorems \ref{MainTheorem_2} and  \ref{MainTheorem_1} by the method of moments.  
   In order to calculate  the $k$-th moment of the expected empirical distribution $\mu_n$, we introduce some notation and some combinatorial arguments that will repeatedly be used in the proofs.
We can  expand 
\begin{eqnarray*}
\mathbb E\left[ \int x^k d\mu_n(X_n) \right] &=& \frac{1}{n} \mathbb E \left[ \tr (X_n^k)\right]  \\
&=& \frac{1}{n^{\frac{k}{2} + 1}} \sum_{p_1,\ldots, p_k =1}^n \mathbb E\left[ a_n(p_1,p_2) a_n(p_2,p_3) \ldots a_n(p_k,p_1)\right].
\end{eqnarray*}
To simplify the notation we set 
\begin{equation*}
\tau_n(k)\coloneqq \{(P_1,\ldots, P_k): P_j=(p_j,q_j) \in \{1,\ldots,n\}^2, q_j=p_{j+1} \}.
\end{equation*}
Hence, in the expansion of the $k$-th moment we  write 
$ a_n(P_i) \text{ instead of } a_n(p_i,p_{i+1})$ for $P_i=(p_i,p_{i+1})$ and we sum over all $(P_1,\ldots, P_k) \in\tau_n(k)$.
Throughout this paper, we identify $k+1$  with $1$.

In order to display the dependence structure of the matrix entries, we use the notion of partitions as suggested by   \cite{BDJ}. We want to express that $a_n(P_i)$ and $a_n(P_j)$ are not stochastically independent, i.e.~they denote entries on the same skew-diagonal, by  $i$ and $j$ being  in the same block of the corresponding partition.
More precisely, for a partition $\pi$ of $\{1,\ldots,k\}$ we call $(P_1,\ldots, P_k) \in\tau_n(k)$ a $\pi$\textit{-consistent sequence} if 
\begin{equation} \label{equivalence_relation}
p_i+q_i = p_j+q_j\quad \Leftrightarrow \quad  i\sim j.
\end{equation}
We write $i\sim j$ instead of $i\sim_{\pi} j$ if the partition $\pi$
can be recovered from the context.
With \begin{equation*}
\mathcal{P}(k) \coloneqq \{\pi : \pi \text{ is a partition of } \{1,\ldots, k\}\},
\end{equation*}
\begin{equation*}
S_n(\pi) \coloneqq \{(P_1,\ldots,P_k) \in \tau_n(k) : (P_1,\ldots,P_k) \text{ is } \pi \text{-consistent} \}, \quad \pi \in \mathcal{P}(k),\end{equation*}
we have 
\begin{eqnarray}
&&\frac{1}{n^{\frac{k}{2}  + 1}} \sum_{p_1,\ldots, p_k =1}^n \mathbb E\left[ a_n(p_1,p_2) a_n(p_2,p_3) \ldots a_n(p_k,p_1)\right]\nonumber \\
&=& \frac{1}{n^{\frac{k}{2}  + 1}} \sum_{\pi \in \mathcal{P}(k)} \sum_{(P_1,\ldots,P_k) \in S_n(\pi)} \mathbb E\left[ a_n(P_1) a_n(P_2) \ldots a_n(P_k)\right]. \label{partition:1}
\end{eqnarray}
Here, we used that the sets $S_n(\pi), \pi \in \mathcal{P}(k)$, are a partition of $\tau_n(k)$.
In the next subsection we argue that it is only the pair partitions that give a non-vanishing contribution in (\ref{partition:1}).

\subsection{Reduction to pair partitions} We observe that in  (\ref{partition:1}) terms corresponding to partitions $\pi$ with more than $\frac{k}{2}$ blocks, i.e.~$\# \pi > \frac{k}{2}$,  vanish, since in this case there is a partition block with a single element $i$. Hence, $a_n(P_i)$ is independent of all the other $a_n(P_j), j\neq i$ if $(P_1,\ldots,P_k) \in S_n(\pi)$ and as $a_n(P_i)$ is centered (see (A1)) the respective term equals zero. 
We claim that for partitions with less than $k/2$ blocks we have
\begin{equation}
\frac{\# S_n(\pi)}{n^{\frac{k}{2}  + 1}}=o(1), \quad    \# \pi < \frac{k}{2}.
\label{partition:2}
\end{equation}
This can be seen from the following combinatorial arguments used to determine the number of possibilities to construct an element $(P_1,\ldots, P_k) \in S_n(\pi)$:
\begin{itemize}
\item Once $P_i=(p_i,p_{i+1})$ is fixed, the pair $P_{i+1}=(p_{i+1},p_{i+2})$ is  determined by the choice of $p_{i+2}$.
\item We start with the choice of $P_1=(p_1,p_2)$, for which there are at most $n^2$ possibilities.
\item We proceed sequentially to determine $P_2, P_3, \ldots  $ as follows: To determine $P_i$, if $i$ is in the same block of $\pi$ as some preceding index $j \in\{1,\ldots, i-1\}$, there is no choice left, as the indices need to satisfy $p_j+p_{j+1}=p_i+p_{i+1}$, where $p_j,p_{j+1},p_i$ are already known. Otherwise, there are at most $n$ possible choices. Once $P_1$ is fixed, there are  $n$ possibilities for each `new' partition block, i.e.~$\# \pi -1$ times. 
\end{itemize}
Hence, we obtain $\# S_n(\pi)\leq  n^2 \cdot n^{\#\pi -1}= n^{\#\pi +1}$, proving  the claim in (\ref{partition:2}). 
Together with 
\begin{equation*}
|\mathbb E [a_n(P_1) \ldots a_n(P_k)]| \leq  \prod_{i=1}^k \left[  \mathbb E|a_n(P_i)|^k\right]^{\frac{1}{k}} \leq m_k,
\end{equation*}
 which is a consequence of the uniform boundedness of the moments in (A2) and    H\"older's inequality, we obtain  
 \begin{equation}\label{partition:3}
\frac{1}{n} \mathbb E \left[ \tr X_n^k \right]= 
\frac{1}{n^{\frac{k}{2}  + 1}} \sum_{\pi \in \mathcal{P}(k) \atop \# \pi=\frac{k}{2}} \sum_{(P_1,\ldots,P_k) \in S_n(\pi)} \mathbb E\left[ a_n(P_1) a_n(P_2) \ldots a_n(P_k)\right] +o(1).
 \end{equation}
 In particular, the odd moments vanish. 
Moreover, it suffices to consider pair partitions $\pi$ in (\ref{partition:3}), i.e.~partitions where each block has exactly two elements. Indeed,  partitions with $\# \pi=\frac{k}{2}$ that are not pair partitions contain a block with a single element and do hence not contribute to (\ref{partition:3}) by the same reasoning that allowed us to exclude partitions with $\# \pi > \frac{k}{2}$.
 %
 %
 %
 \subsection{Partitions and combinatorics}  
We sum up some combinatorial arguments  about partitions and introduce some notation. 
A recurring combinatorial consideration is the following:
Suppose we want to determine the number of possible vectors $(P_1,\ldots, P_k) \in S_n(\pi)$ or parts of this vector for a given pair partition $\pi$. We write $P_i=(p_i,p_{i+1}), i=1,\ldots,k$ and state two counting principles:
\begin{enumerate}
\item[(CP1)]
Assume that only $p_i$ is already fixed for some $i \in \{1,\ldots,k\}$ and we want to choose values for $p_{i+1},\ldots, p_{j+1}$ for some $j>i$ (i.e.~we want to fix $P_i,\ldots,P_j$). We start with the choice of $p_{i+1}$, for which there are $n$
possibilities. Then, we proceed sequentially to fix $p_{i+2},p_{i+3},\ldots$ and for each $p_l$ we have $n$ choices if $l$ is not in the same block as any of the $i,i+1,\ldots,l-1$. Otherwise there is no choice and $p_l$ is already determined by the requirement of $\pi$-consistency. Hence, if $r$ denotes the number of blocks that are occupied by $\{i,\ldots,j\}$,
we have $n^r$ possibilities to fix $P_i,\ldots,P_j$ (for given  $p_i$).
\item[(CP2)]
As in (CP1) we want to choose values for $p_{i+1},\ldots, p_{j+1}$ for some $j>i$ (i.e.~we want to fix $P_i,\ldots,P_j$) and we assume that in addition to $p_i$  some values $P_{i_1},\ldots, P_{i_l}$ with $\{i_1,\ldots,i_l \}\cap \{i,\ldots, j\}=\emptyset$ have already been fixed. Again, we  start with the choice of $p_{i+1}$. If $i+1$ is not equivalent to any of the $i_1,\ldots,i_l$, there are $n$ possibilities, otherwise $p_{i+1}$ is already fixed by the  $\pi$-consistency. For $p_{i+2}$ there are $n$ possibilities if $i+2$ is not equivalent to any of the indices $i+1,i_1,\ldots,i_{l}$, otherwise there is no choice. Proceeding sequentially, we have 
 $n^{r-s}$ possibilities to fix $P_i,\ldots,P_j$  if $r$ denotes the number of partition blocks that are occupied by $\{i,\ldots,j\}$ and $s$ denotes the number of indices in $\{i,\ldots,j\}$ that are equivalent to any of the $i_1,\ldots,i_l$. In other words, $r-s$ is the number of partition blocks occupied by  $\{i,\ldots,j\}$, which have an empty intersection with $\{i_1,\ldots,i_l \}$.
\end{enumerate}
After stating these counting principles, which we will use in the later proofs, we sum up some facts about pair partitions.
We distinguish between \textit{crossing} pair partitions   and \textit{non-crossing}  pair partitions. A pair partition is said to be crossing if there exist indices $i<j<l<m$ with $i\sim l$ and $j \sim m$. 
We set 
\begin{align*}
\mathcal{PP}(k)&\coloneqq \{ \pi \in \mathcal{P}(k): \pi \text{ is a pair partition}\},\\
\mathcal{CPP}(k)& \coloneqq \{ \pi \in \mathcal{P}(k): \pi \text{ is a crossing pair partition}\},\\
\mathcal{NCPP}(k)& \coloneqq \{ \pi \in \mathcal{P}(k): \pi \text{ is a non-crossing pair partition}\}.
\end{align*}
For a non-crossing pair partition $\pi \in \mathcal{NCPP}(k)$ and $(P_1,\ldots,P_k) \in S_n(\pi)$ we have
\begin{itemize}
\item[(NC1)]
There exist indices $i,j \in \{1,\ldots, k\}$ with $i\sim j $ and $j=i+1$.
\item[(NC2)]
If $i\sim j$ and $j=i+1$, we have $a_n(P_i)=a_n(P_j)$ and hence we have  $\mathbb E [a_n(P_i)a_n(P_j)]=1$. This is due to the fact that $i \sim i+1$ implies that for $P_i=(p_i,p_{i+1})$ and $P_{i+1}=(p_{i+1}, p_{i+2})$  we have $p_i=p_{i+2}$. Hence, by the symmetry of the considered matrix we have $a_n(P_i)=a_n(p_{i+2},p_{i+1})=a_n(P_{i+1})$.
\item[(NC3)] If  $i\sim j$ and $j=i+1$, the sequence $P'\coloneqq(P_1, \ldots, P_{i-1}, P_{i+2}, \ldots, P_k)$ is  in $\tau_n(k-2)$. This is a consequence of (NC2).
\item[(NC4)] The number of non-crossing pair partitions on $k$ elements is given by  (see e.g.~Lemma 8.9 in \cite{Speicher})
\begin{equation*}
\# \mathcal{NCPP}(k)=C_{\frac{k}{2} },
\end{equation*}
where $C_k\coloneqq\frac{1}{k+1} \binom{2k}{k}$ denotes the $k$-th Catalan number.
\end{itemize}
With these notations and results about pair partitions we can continue with the calculation of (\ref{partition:3}).
\section{Calculating the Expected $k$-th Moment of $\mu_n$}\label{sec_kthmoment}
We return to the expected $k$-th moment of the spectral distribution given in (\ref{partition:3}).  In the following lemma, we show that summing over all non-crossing pair partitions in (\ref{partition:3}) equals $C_{\frac{k}{2} }$ under both (C1) and (C2). The contribution of the crossing partitions  is studied in subsection \ref{subsec_C1} for condition (C1)  and  in subsection \ref{subsec:C1} for condition (C2).
\begin{lemma}[cf.~Lemma 5.2 and 5.3 in \cite{FL2}]\label{lemma_NCPP}
Under condition (C1) and (C2) we have for $k\in \mathbb N$ even
\begin{align*}
\frac{1}{n} \mathbb E \left[\tr X_n^k \right]= C_{\frac{k}{2} } +
\frac{1}{n^{\frac{k}{2}  + 1}} \sum_{\pi \in \mathcal{CPP}(k) } \sum_{(P_1,\ldots,P_k) \in S_n(\pi)} \mathbb E\left[ a_n(P_1) a_n(P_2) \ldots a_n(P_k)\right] +o(1).
\end{align*} 
\end{lemma}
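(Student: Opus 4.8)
The plan is to start from (\ref{partition:3}) and the remark immediately following it that, among the partitions with $\#\pi=k/2$, only pair partitions contribute, so that
\[
\frac{1}{n}\,\mathbb{E}\!\left[\tr X_n^k\right]=\frac{1}{n^{k/2+1}}\sum_{\pi\in\mathcal{PP}(k)}\ \sum_{(P_1,\ldots,P_k)\in S_n(\pi)}\mathbb{E}\!\left[a_n(P_1)\cdots a_n(P_k)\right]+o(1).
\]
Splitting $\mathcal{PP}(k)$ into the disjoint union $\mathcal{NCPP}(k)\cup\mathcal{CPP}(k)$, the assertion of Lemma~\ref{lemma_NCPP} is equivalent to
\[
\frac{1}{n^{k/2+1}}\sum_{\pi\in\mathcal{NCPP}(k)}\ \sum_{(P_1,\ldots,P_k)\in S_n(\pi)}\mathbb{E}\!\left[a_n(P_1)\cdots a_n(P_k)\right]=C_{k/2}+o(1).
\]
Since, by (NC4), the number of non-crossing pair partitions of $\{1,\ldots,k\}$ equals $C_{k/2}$ and does not depend on $n$, it suffices to show that for each fixed $\pi\in\mathcal{NCPP}(k)$ one has $\frac{1}{n^{k/2+1}}\sum_{(P_1,\ldots,P_k)\in S_n(\pi)}\mathbb{E}[a_n(P_1)\cdots a_n(P_k)]=1+o(1)$. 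I note that this part of the argument uses neither (C1) nor (C2); the two covariance conditions enter only in the treatment of the crossing partitions in the subsequent subsections.

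I would establish this identity for a single non-crossing pair partition by induction on the even integer $k$. For $k=2$ the only pair partition is $\pi=\{\{1,2\}\}$, for which $S_n(\pi)=\{\,((p_1,p_2),(p_2,p_1)):1\le p_1,p_2\le n\,\}$ and $\mathbb{E}[a_n(p_1,p_2)a_n(p_2,p_1)]=\mathbb{E}[a_n(p_1,p_2)^2]=1$ by (A1); dividing the $n^2$ summands by $n^2$ gives exactly $1$. For the induction step, fix $\pi\in\mathcal{NCPP}(k)$ and use (NC1) to choose $i$ with $i\sim i+1$. By (NC2) we have $p_i=p_{i+2}$ and $a_n(P_i)=a_n(P_{i+1})$, hence $a_n(P_i)a_n(P_{i+1})=a_n(P_i)^2$. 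As $\{i,i+1\}$ is a block of the pair partition $\pi$, the skew-diagonal index $p_i+p_{i+1}$ differs from $p_j+p_{j+1}$ for every $j\notin\{i,i+1\}$, so by (A3) the variable $a_n(P_i)$ is independent of the family $\{a_n(P_j):j\ne i,i+1\}$; combined with $\mathbb{E}[a_n(P_i)^2]=1$ this gives
\[
\mathbb{E}\!\left[a_n(P_1)\cdots a_n(P_k)\right]=\mathbb{E}\Big[\textstyle\prod_{j\ne i,i+1}a_n(P_j)\Big]=\mathbb{E}\!\left[a_n(P'_1)\cdots a_n(P'_{k-2})\right],
\]
where $P'\coloneqq(P_1,\ldots,P_{i-1},P_{i+2},\ldots,P_k)$. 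By (NC3), $P'\in\tau_n(k-2)$, and one checks that $P'$ is $\pi'$-consistent, where $\pi'$ is the non-crossing pair partition of $\{1,\ldots,k-2\}$ obtained from $\pi$ by deleting the block $\{i,i+1\}$ and relabeling.

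To finish the induction I would count the fibres of the map $S_n(\pi)\to S_n(\pi')$, $P\mapsto P'$. Given $P'\in S_n(\pi')$, reinserting the deleted pair means choosing a value $p_{i+1}\in\{1,\ldots,n\}$ and setting $P_i=(p_i,p_{i+1})$, $P_{i+1}=(p_{i+1},p_i)$; the reconstructed sequence always lies in $\tau_n(k)$, and it lies in $S_n(\pi)$ unless $p_{i+1}$ forces $p_i+p_{i+1}$ to coincide with one of the at most $k-2$ sums $p_j+p_{j+1}$, $j\ne i,i+1$. Hence the number $N(P')$ of admissible values of $p_{i+1}$ satisfies $|N(P')-n|\le k$. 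Since the expectation $\mathbb{E}[a_n(P'_1)\cdots a_n(P'_{k-2})]$ does not depend on $p_{i+1}$, we obtain
\[
\sum_{(P_1,\ldots,P_k)\in S_n(\pi)}\!\!\mathbb{E}\!\left[a_n(P_1)\cdots a_n(P_k)\right]=\sum_{P'\in S_n(\pi')}N(P')\,\mathbb{E}\!\left[a_n(P'_1)\cdots a_n(P'_{k-2})\right],
\]
and replacing each $N(P')$ by $n$ introduces an error of at most $k\,m_k\,\#S_n(\pi')\le k\,m_k\,n^{k/2}$, using $\#S_n(\pi')\le n^{\#\pi'+1}=n^{k/2}$ from the counting in the reduction to pair partitions and the bound $|\mathbb{E}[a_n(P'_1)\cdots a_n(P'_{k-2})]|\le m_k$ from H\"older's inequality and (A2). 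Dividing by $n^{k/2+1}$ therefore yields
\[
\frac{1}{n^{k/2+1}}\sum_{P\in S_n(\pi)}\mathbb{E}\!\left[a_n(P_1)\cdots a_n(P_k)\right]=\frac{1}{n^{(k-2)/2+1}}\sum_{P'\in S_n(\pi')}\mathbb{E}\!\left[a_n(P'_1)\cdots a_n(P'_{k-2})\right]+O(1/n),
\]
and by the induction hypothesis (applied to $\pi'\in\mathcal{NCPP}(k-2)$) the right-hand side is $1+o(1)$. Summing over the $C_{k/2}$ non-crossing pair partitions gives the lemma.

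I expect the only genuinely delicate point to be this final counting step: the index $p_{i+1}$ freed when an adjacent pair is removed cannot range over all of $\{1,\ldots,n\}$, since $\pi$-consistency forbids the new block from accidentally lying on a skew-diagonal already used by the remaining entries. This produces a discrepancy of size $O(1)$ per reduced configuration; because there are only $O(n^{k/2})$ such configurations and we divide by $n^{k/2+1}$, the discrepancy is absorbed into the $o(1)$ error, but one must arrange the bookkeeping — in particular the factorization of the expectation and its independence of $p_{i+1}$ — so that this combinatorial error does not become entangled with the expectations.
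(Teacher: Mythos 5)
Your proposal is correct and follows essentially the same route as the paper: both reduce a non-crossing pair partition by removing an adjacent block $\{i,i+1\}$ via (NC1)--(NC3), observe that the corresponding factor contributes $\mathbb E[a_n(P_i)^2]=1$ and is independent of the rest by (A3), and count the reinsertion of $p_{i+1}$ as $n+O(1)$ choices, inducting down to the base case $k=2$ with $\#S_n(\pi)=n^2$. The only (cosmetic) difference is that the paper first shows $\mathbb E[a_n(P_1)\cdots a_n(P_k)]=1$ for every consistent sequence and then separately proves $\#S_n(\pi)/n^{k/2+1}\to 1$, whereas you interleave the two in a single induction.
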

\begin{proof}
By successively applying (NC1)-(NC3) we have for any $\pi \in \mathcal{NCPP}(k)$ and $(P_1,\ldots,P_k) \in S_n(\pi)$:
\begin{equation*}
\mathbb E [a_n(P_1) \ldots a_n(P_k)]=1.
\end{equation*}
We claim that 
\begin{equation} \label{Anz_S_n}
\lim_{n \to \infty} \frac{\# S_n(\pi)}{n^{\frac{k}{2}+1}}=1, \quad \pi \in \mathcal{NCPP}(k).
\end{equation}
Let $(P_1,\ldots,P_k)$ be in $S_n(\pi)$.
According to (NC1)-(NC3) we have $ i \sim i+1$ for some $i\in \{1,\ldots,k\}$ and $P' \coloneqq(P_1, \ldots, P_{i-1}, P_{i+2}, \ldots, P_k) \in \tau_n(k-2)$. Moreover, we have 
\begin{equation*}
P' \in  S_n(\pi'),
\end{equation*}
where 
$
\pi' \coloneqq\pi \setminus \{\{i,i+1\}\} \in \mathcal{NCPP}(k-2) $ and all $l\geq i+2 $ are relabeled  to $l-2$.

Thus, all possible $(P_1,\ldots, P_k) \in S_n(\pi)$ can be constructed from a choice of $P'$ and a choice of $p_{i+1}$. For $p_{i+1}$ there are $n-\frac{k-2}{2}$ possibilities, as we have to ensure that $p_i+p_{i+1}$ does not equal any of the $(k-2)/2$ values  $p_j+ p_{j+1}$ for $j\neq i$, $j\neq i+1$.
This implies
\begin{equation*}
\frac{\# S_n(\pi)}{n^{\frac{k}{2}+1}} = \frac{\# S_n(\pi')}{n^{\frac{k}{2}}} + o(1).
\end{equation*}
The claim in (\ref{Anz_S_n}) then follows by induction and the fact that for $k=2$ we have
$\# S_n(\pi)= \{((p,q), (q,p)): p,q \in \{1,\ldots,n\}\}=n^2.$ Statement (NC4) completes the proof.
 \end{proof}
\subsection{The expected $k$-th moment of $\mu_n$ under (C1)}\label{subsec_C1}
In this subsection we assume that condition (C1) is satisfied and we show that for $k$ even  the expected $k$-th moment of $\mu_n$ is asymptotically given by $C_{\frac{k}{2}}$.
\begin{lemma}[cf.~Lemma 3.3 and Lemma 3.4 in \cite{FL1}]\label{Lemma_4}
If  (C1) is satisfied, we have for $k \in \mathbb N$ even
\begin{equation*}
\lim_{n \to \infty }\frac{1}{n} \mathbb E \left[ \tr X_n^k \right]= C_{\frac{k}{2} }. 
\end{equation*}
\end{lemma}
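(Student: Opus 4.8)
The plan is to show that under condition (C1) the contribution of the crossing pair partitions to the formula in Lemma~\ref{lemma_NCPP} vanishes as $n\to\infty$, so that only the $C_{k/2}$ coming from non-crossing pair partitions survives. By Lemma~\ref{lemma_NCPP} it suffices to prove
\begin{equation*}
\frac{1}{n^{\frac{k}{2}+1}} \sum_{\pi \in \mathcal{CPP}(k)} \sum_{(P_1,\ldots,P_k)\in S_n(\pi)} \mathbb E\left[a_n(P_1)\cdots a_n(P_k)\right] = o(1),
\end{equation*}
and since $\mathcal{CPP}(k)$ is a finite set it is enough to fix one crossing pair partition $\pi$ and bound the inner sum by $o(n^{k/2+1})$. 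First I would use H\"older's inequality exactly as in the derivation of (\ref{partition:3}) to get $|\mathbb E[a_n(P_1)\cdots a_n(P_k)]|\leq m_k$, but this only gives $\#S_n(\pi)\leq n^{k/2+1}$, which is not enough. The improvement must come from two sources: a sharper counting of $\#S_n(\pi)$ using the crossing structure, and exploitation of the covariance decay in (C1)(ii).

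The key step is the refined count. For a crossing pair partition the naive bound $\#S_n(\pi)\le n^{\#\pi+1}=n^{k/2+1}$ from (CP1) is attained only if, whenever we reach a ``new'' block, we genuinely have $n$ free choices; but for a crossing partition one can arrange the sequential construction so that at least one index, upon closing a block, is over-determined and forces a \emph{constraint} rather than a free choice, saving a factor of $n$ and yielding $\#S_n(\pi)\le C\, n^{k/2}$. This is the skew-diagonal analogue of the standard fact that crossing partitions are ``one dimension smaller.'' The subtlety flagged in the introduction is that the linear relations $\sum_{i=l}^m(p_i-p_{i+1})=p_l-p_{m+1}$ which make this transparent for independent-diagonal ensembles are \emph{not} directly available here, because the skew-diagonal relation involves $p_i+p_{i+1}$, not $p_i-p_{i+1}$. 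So instead I would track the quantities $s_i\coloneqq p_i+p_{i+1}$ and note that $i\sim j$ means $s_i=s_j$; walking through the blocks of $\pi$, a crossing forces a nontrivial linear dependence among the $s_i$'s and the $p_i$'s that removes one degree of freedom. To convert the remaining count into genuine smallness I would then bring in (C1): writing $\mathbb E[a_n(P_1)\cdots a_n(P_k)]$ as a sum over pairings of the indices within blocks — but since $\pi$ is already a pair partition, each block $\{i,j\}$ contributes a covariance factor $|\mathrm{Cov}(a_n(P_i),a_n(P_j))|=c_n(|p_i-p_j|)$ — so the inner sum is bounded by a product of $k/2$ such factors summed over the $p_i$'s, and summing $c_n$ over one free index gives $\sum_{\tau=0}^{n-1}c_n(\tau)=o(n)$ by (C1)(ii), which beats the corresponding power of $n$.

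Concretely, I would organize the argument as follows: (i) fix $\pi\in\mathcal{CPP}(k)$ and pick a pair of crossing blocks $\{i,l\}$, $\{j,m\}$ with $i<j<l<m$; (ii) construct $(P_1,\dots,P_k)\in S_n(\pi)$ sequentially as in (CP1), choosing $P_1$ freely ($n^2$ ways) and then one new value per new block, but observe that when the second element $l$ of the block $\{i,l\}$ is reached the index $p_{l+1}$ is pinned by $p_i+p_{i+1}=p_l+p_{l+1}$; (iii) additionally use (NC2)-type symmetry reductions to peel off any ``trivial'' adjacent blocks; (iv) for the remaining blocks replace one factor of $n$ in the count by $\sum_\tau c_n(\tau)=o(n)$ using (C1)(i), because the covariance factor for a crossing block depends on the distance between two $p$-indices that are still free. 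The main obstacle, and the point where the real work lies, is step (ii)–(iv): making the bookkeeping of ``which index is free, which is forced, and which distance controls a covariance'' precise and uniform over all of $\mathcal{CPP}(k)$, i.e., showing that every crossing pair partition genuinely loses at least one power of $n$ (either from an over-determined index or from a decaying covariance sum). Once that combinatorial lemma is in place, summing the $o(n^{k/2+1})$ bound over the finitely many $\pi\in\mathcal{CPP}(k)$ and combining with Lemma~\ref{lemma_NCPP} immediately gives $\frac1n\mathbb E[\tr X_n^k]=C_{k/2}+o(1)$, which is the claim.
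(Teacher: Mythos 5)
Your overall target is right (show the crossing pair partitions contribute $o(n^{k/2+1})$ and invoke Lemma \ref{lemma_NCPP}), and one of your two proposed mechanisms --- exploiting the covariance decay $\sum_\tau c_n(\tau)=o(n)$ --- is indeed how the paper proceeds. But the step you call ``the key step'' is false for this ensemble. You claim that for a crossing pair partition the sequential construction over-determines an index and yields $\#S_n(\pi)\le C\,n^{k/2}$. That is the Wigner-matrix phenomenon, and it fails here: for skew-diagonal ensembles the constraint $i\sim j$ is the single linear relation $p_i+p_{i+1}=p_j+p_{j+1}$, which costs exactly one degree of freedom per block whether or not the blocks cross, so (CP1) gives only $\#S_n(\pi)\le n^{k/2+1}$ with nothing to spare. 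Indeed the paper's Proposition (Lemma 4.8 in \cite{BDJ}) states $\#S_n(\pi)/n^{k/2+1}\to p_H(\pi)$, and $p_H(\pi)>0$ for crossing $\pi$ in general --- e.g.\ for $k=4$ and $\pi=\{\{1,3\},\{2,4\}\}$ one has $p_H(\pi)=2/3$, so $\#S_n(\pi)\sim\tfrac23 n^3$, not $O(n^2)$. If your refined count were true, the $c=1$ (Hankel) case of Theorem \ref{MainTheorem_1} would again give the semicircle, contradicting \cite{BDJ}. So \emph{all} of the gain must come from the covariance decay; there is no purely combinatorial saving.

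Your sketch of the covariance route also glosses over the two points where the real work sits. First, one must peel off adjacent blocks $i\sim i+1$ before extracting a decaying covariance, because there $a_n(P_i)=a_n(P_{i+1})$ by symmetry and the factor is the variance $1$, with no decay; the paper's reduction $\pi\mapsto\pi^{(r)}$ does this and uses that a crossing partition retains at least two blocks (so $r\le k/2-2$ and a non-adjacent pair $i\sim i+j$, $j\ge2$, survives). You mention peeling in passing, which is fine. Second, and more importantly, the identity $|\mathrm{Cov}(a_n(P_i),a_n(P_j))|=c_n(|p_i-p_j|)$ that you use for every block is only valid when the two pairs are consistently oriented (both increasing or both decreasing); otherwise the relevant distance is $|p_i-p_{j+1}|$. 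Since (C1)(i) is stated only for upper-triangular representatives, this orientation issue cannot be ignored --- the paper handles it by splitting into ``positive'' and ``negative'' pairs and counting configurations with the two controlling indices ($q_i,q_{i+j}$ or $q_i,q_{i+j+1}$) held fixed, obtaining $n^{k/2-r-1}$ configurations per fixed pair and then summing $c_n$ over one free index. Note also that the paper retains only \emph{one} covariance factor (bounding the others by $1$ via H\"older), which keeps the bookkeeping manageable; keeping all $k/2$ factors as you propose is not wrong but forces you to control the orientation of every block simultaneously. As written, your argument has a genuine gap at its central combinatorial claim and would need to be rebuilt around the covariance estimate alone.
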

\begin{proof}
By Lemma \ref{lemma_NCPP} it suffices to show that for each $\pi \in \mathcal{CPP}(k)$ (recall that the number of these partitions depends on $k$ only) we have  
\begin{equation*}
\lim_{n \to \infty}\frac{1}{n^{\frac{k}{2}  + 1}}  \sum_{(P_1,\ldots,P_k) \in S_n(\pi)} \mathbb E\left[ a_n(P_1) a_n(P_2) \ldots a_n(P_k)\right] =0.
\end{equation*}
Let $\pi\in \mathcal{CPP}(k)$. We will define related partitions $\pi^{(1)},\ldots, \pi^{(r)}$ by successively deleting blocks of $\pi$ such that we arrive at some partition $\pi^{(r)} \in \mathcal{CPP}(k-2r)$, for which adjacent elements $j,j+1$ are in different blocks.  Suppose that $l \sim_{\pi} l+1$ for some $l \in \{1,\ldots,k\}$, otherwise we set $r=0, \pi^{(r)}=\pi$. Then we obtain $\pi^{(1)}$ from $\pi$ by deleting the block $\{l,l+1\}$
\begin{equation*}
\pi^{(1)}\coloneqq \pi \setminus \{\{l,l+1\}\}
\end{equation*}
and relabeling all $j\geq l+2$ to $j-2$. Hence $\pi^{(1)} \in \mathcal{CPP}(k-2)$.
Correspondingly, we delete  $P_l,P_{l+1}$ from $(P_1,\ldots,P_k)$ to obtain (see (NC3))
\begin{equation*}
(P_1, \ldots, P_k)^{(1)}\coloneqq(P_1, \ldots, P_{l-1}, P_{l+2}, \ldots, P_k)\in S_n(\pi^{(1)}).
\end{equation*}
We repeat this procedure to obtain $\pi^{(2)}$ and $(P_1, \ldots, P_k)^{(2)},\pi^{(3)}$ and  $(P_1, \ldots, P_k)^{(3)}$, $\ldots$ until we arrive at a partition $\pi^{(r)} \in \mathcal{CCP}(k-2r)$ where none of the blocks contains adjacent elements.   
Then $(P_1, \ldots, P_k)^{(r)} \in S_n(\pi^{(r)})$.
Since $\pi$ is a crossing partition,  at least two blocks of $\pi$ remain after the elimination process and  we have 
\begin{equation*}
r \leq \frac{k}{2}-2.
 \end{equation*}
Using the same arguments as in the proof of Lemma \ref{lemma_NCPP} leads to the following estimate for given $(Q_1,\ldots,Q_{k-2r}) \in \tau_n(k-2r)$: 
\begin{equation*}
\# \{(P_1,\ldots,P_k) \in S_n(\pi): (P_1,\ldots,P_k)^{(r)}=(Q_1,\ldots,Q_{k-2r}) \} \leq n^r.
\end{equation*}
By (NC2) we have for $(P_1,\ldots,P_k)^{(r)}=(Q_1,\ldots,Q_{k-2r})$
\begin{equation*}
\mathbb E[a_n(P_1) \ldots a_n(P_k)]=\mathbb E[a_n(Q_1) \ldots a_n(Q_{k-2r})]
\end{equation*}
and hence
\begin{align}
& \sum_{(P_1,\ldots,P_k) \in S_n(\pi)}| \mathbb E \left[ a_n(P_1) a_n(P_2) \ldots a_n(P_k)\right] | \nonumber  \\
 &\leq\quad  n^r  \sum_{(Q_1,\ldots,Q_{k-2r}) \in S_n(\pi^{(r)})} |\mathbb E\left[ a_n(Q_1) a_n(Q_2) \ldots a_n(Q_{k-2r})\right]|.
\label{NC_1.1}
\end{align}
We choose $i \sim_{\pi^{(r)}} i+j$ such that $j\geq 2$ is minimal. 
By H\"older's inequality we have for any $s,t$
\begin{equation*}
|\mathbb E [a_n(Q_s)a_n(Q_t)]| \leq (\mathbb E [a_n(Q_s)^2])^{1/2} (\mathbb E [a_n(Q_t)^2])^{1/2} =1
\end{equation*}
and hence, as $k$ is even, 
\begin{align*}
|\mathbb E\left[ a_n(Q_1) a_n(Q_2) \ldots a_n(Q_{k-2r})\right]|& \leq |\mathbb E\left[ a_n(Q_i) a_n(Q_{i+j})\right]|= \left|\text{Cov}(a_n(Q_i), a_n(Q_{i+j})) \right|.
\end{align*}
Inserting this estimate into (\ref{NC_1.1}) we obtain 
\begin{align*}
 &n^r  \sum_{(Q_1,\ldots,Q_{k-2r}) \in S_n(\pi^{(r)})} \left|\mathbb E\left[ a_n(Q_1) a_n(Q_2) \ldots a_n(Q_{k-2r})\right]\right|
\\
\leq  \, \, & n^r  \sum_{(Q_1,\ldots,Q_{k-2r}) \in S_n(\pi^{(r)})}  |\mathbb E\left[ a_n(Q_i) a_n(Q_{i+j})\right]|.
\end{align*}
At this point, we would like to use
\begin{equation}\label{false_est}
 |\mathbb E\left[ a_n(Q_i) a_n(Q_{i+j})\right]| = c_n(|q_i-q_{i+j}|),
\end{equation}
then calculate the number of points $(Q_1,\ldots,Q_{k-2r}) \in S_n(\pi^{(r)})$
for given $q_i,q_{i+j}$ and finally use (C1) to obtain 
\begin{equation*}
\sum_{q_i,q_{i+j}=1}^n  c_n(|q_i-q_{i+j}|)=o(n^2).
\end{equation*}
Unfortunately, (\ref{false_est}) is only valid for $q_i\leq q_{i+1}$ and $q_{i+j}\leq q_{i+j+1}$ or for  $q_i\geq q_{i+1}$ and  $q_{i+j}\geq q_{i+j+1}$ where $Q_i=(q_i,q_{i+1})$ and $Q_{i+j}=(q_{i+j},q_{i+j+1})$.
Hence, we have to take the ordering of the $q_i,q_{i+1}$ and $q_{i+j},q_{i+j+1}$ into account.
As before, we denote $Q_l=(q_l,q_{l+1})$ for $l=1,\ldots,k-2r$, where  $k-2r$ is identified with 1.
We distinguish two types of pairs $(Q_i,Q_{i+j})$: We call 
$$(Q_i,Q_{i+j})
\begin{cases}
\text{ positive,} & \text{if } \text{sgn}(q_i-q_{i+1}) = \text{sgn} (q_{i+j}-q_{i+j+1}) \\ \text{ negative,} & \text{otherwise}
\end{cases}.$$
Then we have 
$$|\text{Cov}(Q_i,Q_{i+j})| =
\begin{cases}
 c_n(|q_i-q_{i+j}|),& \text{if }  (Q_i,Q_{i+j}) \text{ positive}\  \\c_n(|q_i-q_{i+j+1}|),&  \text{if }  (Q_i,Q_{i+j}) \text{ negative} 
\end{cases}.$$
We claim the following: 
For given $q_i$ and $q_{i+j}$ there are less than $n^{\frac{k}{2} -r-1}$ tuples $(Q_1,\ldots,Q_{k-2r}) \in S_n(\pi^{(r)})$ with $ (Q_i,Q_{i+j})$ positive. Similarly, for given $q_i,q_{i+j+1}$ there are less than $n^{\frac{k}{2} -r-1}$ tuples $(Q_1,\ldots,Q_{k-2r}) \in S_n(\pi^{(r)})$ with $ (Q_i,Q_{i+j})$ negative.
We start with the case  $ (Q_i,Q_{i+j})$ positive and $q_i,q_{i+j}$ fixed. We have $n$ possible choices for $q_{i+1}$. By $i\sim i+j$, this determines the value of $q_{i+j+1}$ (recall $q_{i+j}$ is fixed) and hence $Q_i,Q_{i+j}$ are fixed. Since  $j$ is chosen to be minimal, the $j-1$ elements in $\{i+1,\ldots, i+j-1\}$ lie in $j-1$ different partition blocks. Hence, we have $n$ possibilities for each of the $j-2$ points $q_{i+2}, \ldots, q_{i+j-1}$. So far, there were $n \cdot n^{j-2}$ possibilities and we fixed $Q_i,\ldots, Q_{j+i}$.
We want to apply counting principle (CP2) to determine the number of possible choices for the remaining pairs $Q_{i+j+1}, \ldots, Q_{k-2r}, Q_1,\ldots, Q_{i-1}$. Hence, we have to determine the number of partition blocks occupied by $ {i+j+1}, \ldots,  {k-2r},  1,\ldots,  {i-1}$, that have an empty intersection with the set $\{i,\ldots,i+j\}$. From the total of $\frac{k}{2}-r$ partition blocks of $\pi^{(r)}$ one block is occupied by $i$ and $i+j$ and the $j-1$  blocks occupied by  $\{i+1,\ldots, i+j-1\}$ each contain one element in $\{{i+j+1}, \ldots,  {k-2r},  1,\ldots,  {i-1}\}$ as well. Thus, (CP2) gives $n^{\frac{k}{2}-r-1-(j-1)}=n^{\frac{k}{2}-r-j}$ possibilities to fix  $Q_{i+j+1}, \ldots, Q_{k-2r}, Q_1,\ldots, Q_{i-1}$.
Hence, for fixed $q_i,q_{i+j}$ we have a total of 
 $$n\cdot n^{j-2} n^{(\frac{k}{2}-r-j)}=n^{\frac{k}{2}-r-1} $$
 possibilities to choose  $Q_1,\ldots,Q_{k-2r}$.
 We obtain 
\begin{align}
&n^r  \sum_{(Q_1,\ldots,Q_{k-2r}) \in S_n(\pi^{(r)}) \atop Q_i,Q_{i+j} \text{ positive}}  |\mathbb E\left[ a_n(Q_i) a_n(Q_{i+j})\right]|
\nonumber \\
 \leq  \, \,& n^{\frac{k}{2}-1} \sum_{q_i,q_{i+j}=1}^n c_n(|q_i-q_{i+j}|) \leq n^{\frac{k}{2}} \sum_{\tau=0}^{n-1} c_n(\tau)=o(n^{\frac{k}{2}+1})
\label{est_2}
\end{align} 
by $\sum_{\tau=0}^{n-1} c_n(\tau) =o(n)$ (see (ii) in (C1)).

In the case $ (Q_i,Q_{i+j})$ negative and $q_i,q_{i+j+1}$ given, we have $n$ possibilities to fix $q_{i+1}$, determining $q_{i+j}$ by $i \sim j$. Hence, $Q_i,Q_{i+j}$ are fixed and we can proceed just as in the case $ (Q_i,Q_{i+j})$ positive and we obtain 
\begin{align}
&n^r  \sum_{(Q_1,\ldots,Q_{k-2r}) \in S_n(\pi^{(r)}) \atop Q_i,Q_{i+j} \text{ negative}}  |\mathbb E\left[ a_n(Q_i) a_n(Q_{i+j})\right]|
 =o(n^{\frac{k}{2}+1}) \label{est_1}
\end{align} 
Combining  (\ref{est_2}) and (\ref{est_1}) leads to 
\begin{equation*}
 n^r  \sum_{(Q_1,\ldots,Q_{k-2r}) \in S_n(\pi^{(r)})}\left| \mathbb E\left[ a_n(Q_1) a_n(Q_2) \ldots a_n(Q_{k-2r})\right]\right|=o(n^{\frac{k}{2}+1}),
\end{equation*}
completing the proof.
  \end{proof}
%
\subsection{The expected $k$-th moment of $\mu_n$ under (C2)}  \label{subsec:C1}
Before we can proceed to study the large $n$-limit of 
\begin{equation}
\frac{1}{n^{\frac{k}{2}  + 1}} \sum_{\pi \in \mathcal{CPP}(k) } \sum_{(P_1,\ldots,P_k) \in S_n(\pi)} \mathbb E\left[ a_n(P_1) a_n(P_2) \ldots a_n(P_k)\right]
\label{NC1}
\end{equation}
under condition (C2),
we state a combinatorial lemma needed for the proof. Throughout this section we write $P_i=(p_i,p_{i+1}), i=1,\ldots,k$. 
We want to pay special attention to pairs $P_i, P_j$ with $i\sim j$  and 
\begin{equation}
\label{comb_P_quer}
P_i=P_j \quad \text{or} \quad P_i=\overline{P}_j\coloneqq(p_{j+1},p_j).
\end{equation}
The lemma states that  if a block $\{i,j\}$ of a  partition $\pi$ with (\ref{comb_P_quer})
 is crossed by some other block, the number of points $(P_1,\ldots,P_k) \in S_n(\pi)$ is of order $o(n^{\frac{k}{2}+1})$.
\begin{lemma}[Crossing Property, cf.~Lemma 5.4 in \cite{FL2}]\label{crossing_prop}
Let $k\in \mathbb N$, $\pi \in \mathcal{PP}(k)$ and $i<j$ with $i\sim j$. 
Set 
\begin{equation*}
S_n(\pi, i,j)\coloneqq \{(P_1,\ldots,P_k) \in S_n(\pi) : P_i=P_j \text{ or } P_i=\overline{P}_j \}.  
\end{equation*}
If there exist $i',j'$ with $i' \sim j', i<i'<j$ and either $j'<i$ or $j<j'$ (i.e.~the block $\{i,j\}$ is crossed by the block $\{i',j'\}$), we have 
\begin{equation*}
\# S_n(\pi,i,j)=o(n^{\frac{k}{2}+1}).
\end{equation*}
\begin{proof}
To construct $(P_1,\ldots,P_k) \in S_n(\pi, i,j)$, first choose $p_i$ and $p_{i+1}$, each allowing for $n$ possibilities. Then $P_i$ is fixed and we choose one of the two possibilities $P_i=P_j$ or $P_i=\overline{P}_j$, fixing $P_j$. Let $r$ denote the number of partition blocks occupied by $\{i+1, \ldots, i'-1\} \cup \{j,\ldots , i'+1 \}$. By similar arguments as in (CP1)  we have less than $n^r$ choices to fix $P_{i+1},\ldots, P_{i'-1}, P_{j},\ldots, P_{i'+1}$. Hence, $P_{i'}$ is determined by consistency without any further choice. 
So far, we fixed $P_l$ for $l$ in $r+2$ different partition blocks. By (CP2) there are at most  $n^{\frac{k}{2}-r-2}$ choices to fix all remaining points $P_l$. In total, there are $n^{\frac{k}{2} }$ possibilities to construct $(P_1,\ldots,P_k)  \in S_n(\pi, i,j).$
\end{proof}
\end{lemma}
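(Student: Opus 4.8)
The plan is to bound $\#S_n(\pi,i,j)$ directly, by building the tuples $(P_1,\ldots,P_k)\in S_n(\pi,i,j)$ sequentially but starting the construction from the distinguished block $\{i,j\}$ rather than from the left end $P_1$. The guiding observation is that the extra condition $P_i=P_j$ or $P_i=\overline{P}_j$ turns $P_j$ into a binary function of $P_i$: once $P_i$ is chosen, the values $p_j$ and $p_{j+1}$ are determined up to a single sign. Hence, whereas a generic tuple in $S_n(\pi)$ is built from at most $\frac{k}{2}+1$ free index choices (one ``global'' choice plus one per block, as in (CP1)), the block $\{i',j'\}$ that crosses $\{i,j\}$ will here cost \emph{no} free choice at all: propagating the already-known values inwards from both ends of $\{i,j\}$ towards $i'$ forces the pair $P_{i'}$, and with it $P_{j'}$ by $\pi$-consistency. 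This is the one factor of $n$ that is saved, which is exactly what the statement requires.

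Concretely, I would first choose $p_i$ and $p_{i+1}$ ($\leq n^2$ possibilities), which fixes $P_i$, and then select which of $P_i=P_j$, $P_i=\overline{P}_j$ holds (a factor $2$), which fixes $P_j$, i.e.\ the values $p_j$ and $p_{j+1}$. We may assume $j<j'$; the case $j'<i$ is treated identically, since the sweep that forces $P_{i'}$ (and thereby $P_{j'}$) uses only $i<i'<j$. Next I would propagate forward through $i+1,\ldots,i'-1$ (fixing $P_{i+1},\ldots,P_{i'-1}$, hence $p_{i+2},\ldots,p_{i'}$) and backward through $j-1,\ldots,i'+1$ (fixing $P_{j-1},\ldots,P_{i'+1}$, hence $p_{j-1},\ldots,p_{i'+1}$), exactly as in (CP1): if $r$ denotes the number of blocks of $\pi$ met by $\{i+1,\ldots,i'-1\}\cup\{i'+1,\ldots,j-1\}$, these two sweeps cost at most $n^{r}$. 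At this point $p_{i'}$ and $p_{i'+1}$ are both known, so $P_{i'}=(p_{i'},p_{i'+1})$ is determined, and hence so is $P_{j'}$ by $\pi$-consistency. Since $j'$ lies outside $\{i,\ldots,j\}$ and the swept indices lie strictly between $i$ and $j$ and differ from $i'$, none of them lies in the block $\{i',j'\}$ or in $\{i,j\}$; thus $P_l$ has now been fixed for $l$ ranging over $r+2$ distinct blocks, and the remaining $\frac{k}{2}-r-2$ blocks are filled in by (CP2) in at most $n^{\frac{k}{2}-r-2}$ ways. Multiplying, $\#S_n(\pi,i,j)\leq n^{2}\cdot 2\cdot n^{r}\cdot n^{\frac{k}{2}-r-2}=2\,n^{\frac{k}{2}}=o(n^{\frac{k}{2}+1})$.

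The step I expect to be the main obstacle is precisely this bookkeeping: one must verify that the sign choice for $P_j$ genuinely pins down $p_j$ and $p_{j+1}$; that the two propagation sweeps touch only blocks disjoint from both $\{i,j\}$ and $\{i',j'\}$ (which is forced by $i<i'<j$ together with $j'\notin\{i,\ldots,j\}$, so that degenerate configurations such as $i'=i+1$, $i'=j-1$, or $j'$ adjacent to $j$ merely lower the count); and that the free $n$-factors total exactly $\frac{k}{2}$ and not $\frac{k}{2}+1$ — equivalently, that the crossing block $\{i',j'\}$ contributes none. Once this accounting is in place, the bound follows by a routine application of the counting principles (CP1) and (CP2).
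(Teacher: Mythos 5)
Your proposal is correct and takes essentially the same route as the paper's own proof: choose $p_i,p_{i+1}$ and the sign relating $P_j$ to $P_i$, sweep inward from both ends of the block $\{i,j\}$ toward $i'$ so that $P_{i'}$ (and hence $P_{j'}$) is forced without a free choice, and finish with (CP2), giving the bound $O(n^{\frac{k}{2}})=o(n^{\frac{k}{2}+1})$. The bookkeeping you flag as the main obstacle (the swept blocks being disjoint from $\{i,j\}$ and $\{i',j'\}$, and the total of $\frac{k}{2}$ free factors) is exactly the accounting the paper performs.
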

We continue by considering the term in (\ref{NC1}) and observe that for $\pi \in \mathcal{CPP}(k) $ and $(P_1,\ldots,P_k) \in S_n(\pi)$ the term  $\mathbb E\left[ a_n(P_1) a_n(P_2) \ldots a_n(P_k)\right]$ is a product of factors
\begin{equation} \label{Darst_E}
\mathbb E\left[ a_n(P_i) a_n(P_j)\right]= \begin{cases} 1, & \text{ if } P_i=P_j \text{ or }P_i= \overline{P}_j 
\\
c_n, & \text{ else}
\end{cases}, \quad  i \sim j.
\end{equation}
To account for this fact, we introduce the following notation for a given partition $\pi \in \mathcal{CPP}(k)$ and $(P_1,\ldots, P_k) \in S_n(\pi)$:
\begin{equation*}
m(P_1,\ldots,P_k)\coloneqq \# \{ 1 \leq i<j\leq k : P_i=P_j \text{ or }P_i=\overline{P}_j \} \leq \frac{k}{2}.
\end{equation*}
For $l\in \{1,\ldots, \frac{k}{2}\}$ we set 
\begin{equation*}
A_n^{(l)}(\pi)\coloneqq \{(P_1,\ldots,P_k) \in S_n(\pi) : m(P_1,\ldots,P_k)=l\}.
\end{equation*}
Hence, we can write for $\pi \in \mathcal{CPP}(k)$
\begin{align*}
\frac{1}{n^{\frac{k}{2}  + 1}} \sum_{(P_1,\ldots,P_k) \atop \in S_n(\pi)} \mathbb E\left[ a_n(P_1) a_n(P_2) \ldots a_n(P_k)\right]=\frac{1}{n^{\frac{k}{2}  + 1}} \sum_{l=0}^{\frac{k}{2}}  c_n^{\frac{k}{2}-l} \# A_n^{(l)}(\pi).
\end{align*}
Moreover, we set 
\begin{align*}
B_n^{(l)}(\pi)& \coloneqq  \{(P_1,\ldots,P_k) \in S_n(\pi) : m(P_1,\ldots,P_k)=l; \\
&P_i=P_j \text{ or } P_i=\overline{P}_j, i<j\,\, \Rightarrow \, \, j=i+1 \text{ or } \pi|_{\{i+1,\ldots, j-1\}} \text{ is a pair partition}\}.
\end{align*}
By the crossing property of Lemma \ref{crossing_prop} we have
\begin{equation} \label{B_n_l:1}
\frac{1}{n^{\frac{k}{2}  + 1}}  \#\left(A_n^{(l)}(\pi) \setminus B_n^{(l)}(\pi)\right) \to 0 , \quad  n \to \infty.
\end{equation}
Indeed, any point $(P_1,\ldots,P_k) \in \left(  A_n^{(l)}(\pi) \setminus B_n^{(l)}(\pi)\right)  $ belongs to some $S_n(\pi, i,j)$ and 
\begin{equation*}
\#\bigcup_{i,j \in \{1,\ldots,k\}} S_n(\pi, i,j) = o(n^{\frac{k}{2}+1}).
\end{equation*}
In order to show  that $n^{-(\frac{k}{2}  + 1)} \#B_n^{(l)}(\pi)$ vanishes for almost all values of $l$, we introduce the notion of \textit{height} of a pair partition $\pi \in \mathcal{PP}(k)$:
\begin{equation*}
h(\pi)\coloneqq \# \{ 1\leq i<j\leq k, i\sim j: j=i+1 \text{ or } \pi|_{\{i+1,\ldots, j-1\}} \text{ is a pair partition}\}.
\end{equation*}
Since 
\begin{equation*}
\left( (P_i=\overline{P}_j) \text{ or }  (P_i=P_j)  \right) \quad  \Rightarrow  \quad i \sim j,
 \end{equation*}
we have 
\begin{equation}\label{B_n_l:2}
 B_n^{(l)}(\pi) =\emptyset \quad \text{for } l>h(\pi).
\end{equation}
Combining (\ref{B_n_l:1}) and (\ref{B_n_l:2})
leads to 
 \begin{align}\label{B_n_l:3}
\frac{1}{n^{\frac{k}{2}  + 1}} \sum_{l=0}^{\frac{k}{2}}  c_n^{\frac{k}{2}-l} \# A_n^{(l)}(\pi)=\frac{1}{n^{\frac{k}{2}  + 1}} \sum_{l=0}^{h(\pi)}  c_n^{\frac{k}{2}-l} \# B_n^{(l)}(\pi) + o(1).
\end{align}
It is a consequence of the following lemma that only $B_n^{(h(\pi))}(\pi)$ gives a non-vanishing contribution in (\ref{B_n_l:3}).
\begin{lemma}\label{lemma_B_n}
For $k \in \mathbb N$, $\pi \in \mathcal{PP}(k)$ and $l <h(\pi)$ we have
\begin{equation*}
\lim_{n \to \infty} \frac{1}{n^{\frac{k}{2}  + 1}}  \# B_n^{(l)}(\pi) =0.
\end{equation*}
\end{lemma}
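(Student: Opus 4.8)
My plan is to argue by induction on the even integer $k$, first reducing along (cyclically) adjacent blocks and then distinguishing two structurally different situations for the remaining partitions. For $k=2$ there is nothing to show: the only pair partition is $\pi=\{\{1,2\}\}$, for which $h(\pi)=1$, the only admissible value is $l=0$, and $B_n^{(0)}(\pi)=\emptyset$ because $1\sim 2$ forces $P_1=\overline{P}_2$ by (NC2). For the inductive step, let $\pi\in\mathcal{PP}(k)$ and $l<h(\pi)$, and suppose first that $\pi$ contains a block $\{i,i+1\}$, or the block $\{1,k\}$ (these are exactly the cyclically adjacent blocks, and both are always non-crossed). Then $\pi$-consistency forces $p_i=p_{i+2}$, resp.\ $p_2=p_k$, hence $P_i=\overline{P}_{i+1}$, resp.\ $P_1=\overline{P}_k$, for \emph{every} $(P_1,\dots,P_k)\in S_n(\pi)$; the case $l=0$ is then immediate since $m(P_1,\dots,P_k)\geq 1>0$ forces $B_n^{(0)}(\pi)=\emptyset$, so we may take $l\geq 1$. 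Deleting that block and relabelling gives $\pi'\in\mathcal{PP}(k-2)$ with $h(\pi')=h(\pi)-1$; the deletion map $S_n(\pi)\to S_n(\pi')$ is well defined (the forced coincidence $p_i=p_{i+2}$, resp.\ $p_2=p_k$, makes the shortened sequence close up cyclically), its fibres have size at most $n$, it decreases $m(\cdot)$ by exactly one, and it sends $B_n^{(l)}(\pi)$ into $B_n^{(l-1)}(\pi')$. Since $l-1<h(\pi')$, the inductive hypothesis gives $\#B_n^{(l-1)}(\pi')=o(n^{(k-2)/2+1})$, whence $\#B_n^{(l)}(\pi)\leq n\cdot o(n^{k/2})=o(n^{k/2+1})$.

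It remains to treat $\pi\in\mathcal{PP}(k)$ having no cyclically adjacent block. Pick a non-crossed block $\{i,j\}$ of $\pi$; one exists because $h(\pi)>l\geq 0$. Reading $P_{i+1},\dots,P_{j-1}$ as a chain from $p_{i+1}$ to $p_j$, two cases arise. If $\pi|_{\{i+1,\dots,j-1\}}$ is a \emph{non-crossing} pair partition, then repeatedly peeling off innermost blocks, exactly as in the proof of Lemma~\ref{lemma_NCPP}, forces $p_j=p_{i+1}$; together with $i\sim j$ this gives $p_i=p_{j+1}$, i.e.\ $P_i=\overline{P}_j$, so the relation holds at $\{i,j\}$ automatically. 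If this happens at \emph{every} non-crossed block of $\pi$, then $m(P_1,\dots,P_k)\geq h(\pi)>l$ for every $(P_1,\dots,P_k)\in S_n(\pi)$, so $B_n^{(l)}(\pi)=\emptyset$ and we are done; this is the situation that in the Toeplitz/Hankel setting of \cite{FL1} is the only one, and there yields exact vanishing. The genuinely new case is that some non-crossed block $\{i,j\}$ has a \emph{crossing} interior whose chain does not return, i.e.\ $\pi$-consistency does not force $p_j=p_{i+1}$. In this case I would prove the cruder statement $\#S_n(\pi)=o(n^{k/2+1})$, which immediately yields $\#B_n^{(l)}(\pi)\leq\#S_n(\pi)=o(n^{k/2+1})$ for all $l$, in particular for $l<h(\pi)$.

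The estimate $\#S_n(\pi)=o(n^{k/2+1})$ is where I expect the main difficulty. Building $(P_1,\dots,P_k)\in S_n(\pi)$ by the counting principle (CP1) — choosing $p_1$ and then, block by block, the coordinate introduced by each new block — gives the crude bound $n^{k/2+1}$, and this bound is attained only when the last imposed identification $p_{k+1}=p_1$, which closes the cyclic walk, turns out to be automatic. Around the cycle the consistency relations involve the \emph{sums} $p_m+p_{m+1}$, and one has to show that a single non-returning crossing interior (equivalently: a non-crossed block $\{i,j\}$ at which $p_j=p_{i+1}$ is not forced) makes the $k/2$ consistency equations linearly independent, so that $p_{k+1}=p_1$ becomes a genuine extra equation and costs one factor of $n$. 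This is exactly the mechanism absent in \cite{FL1}, where the corresponding relations involve the differences $p_m-p_{m+1}$, which always telescope around the cycle, so that the analogous set is empty rather than merely of lower order (cf.\ Remark~\ref{remark_diff_toeplitzcase}). Making this count uniform in $n$ — and checking that the Crossing Property, Lemma~\ref{crossing_prop}, can be invoked on a crossed block lying inside the non-returning interior in order to reabsorb the degenerate sub-sequences — is the part that needs care; granted it, the cases above combine to the assertion of the lemma.
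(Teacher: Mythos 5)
Your overall architecture---peel off (cyclically) adjacent blocks by induction, then split according to whether the pair-partition interior of a non-crossed block forces $p_j=p_{i+1}$---is reasonable, and you have correctly located the phenomenon that makes this lemma harder than its Toeplitz analogue: the consistency relations involve the sums $p_m+p_{m+1}$, so the return of the chain across an interior is not automatic. The problem is that the proposal stops exactly where the lemma actually has to be proved. The assertion that a non-returning interior costs one factor of $n$---whether in your stronger form $\#S_n(\pi)=o(n^{\frac{k}{2}+1})$ or in the weaker form needed for $B_n^{(l)}(\pi)$---is explicitly deferred (``this is where I expect the main difficulty'', ``granted it''). That step is the entire content of the paper's proof of Lemma \ref{lemma_B_n}, and it is carried out there by a concrete device you do not supply: write $p_{i+1}-p_j$ as the alternating sum $\sum_{s=1}^{j-i-1}(-1)^s(p_{i+s}+p_{i+s+1})$; since $\pi|_{\{i+1,\ldots,j-1\}}$ is a pair partition each block-sum occurs exactly twice, so a nonzero value of $p_{i+1}-p_j$ forces some interior block $\{i+\alpha,i+\beta\}$ to occur with equal signs; prescribing the remaining $\frac{j-i-1}{2}-1$ interior block-sums (at most $2n$ values each) then determines $p_{i+\alpha}+p_{i+\alpha+1}$ and hence all interior points from $p_{i+1}$ and $p_j$. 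This converts the naive $n^{\frac{j-i-1}{2}}$ into $Cn^{\frac{j-i-1}{2}-1}$ and gives $\#B_n^{(l)}(\pi)=O(n^{\frac{k}{2}})$. Without this (or an equivalent rank argument), you have an outline, not a proof.

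Two further, repairable defects. First, your case split is phrased at the level of the partition and is not exhaustive as written: you treat non-crossing interiors (forced return) and crossing interiors that do not force return, but a crossing interior can also force return---e.g.\ the interior $\{2,5\},\{3,6\},\{4,7\}$ of the block $\{1,8\}$, where every interior block has odd gap and the alternating sum telescopes to zero. The correct dichotomy is ``return forced by consistency'' versus ``not forced''. Second, your case (b) claim $\#S_n(\pi)=o(n^{\frac{k}{2}+1})$ is strictly stronger than what is needed and requires its own linear-algebra argument (that the $\frac{k}{2}$ forms $p_{i_b}+p_{i_b+1}-p_{j_b}-p_{j_b+1}$ attain full rank once some block has even gap); it happens to be true, but it is a second unproven claim, and note it cannot hold for general $\pi$ since $\lim n^{-(\frac{k}{2}+1)}\#S_n(\pi)=p_H(\pi)$ is typically positive. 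The paper avoids both issues by arguing per sequence: for each $(P_1,\ldots,P_k)\in B_n^{(l)}(\pi)$ with $l<h(\pi)$ there is \emph{some} height-contributing block with $p_{i+1}\neq p_j$, and for each fixed such block the alternating-sum count bounds the corresponding subset by $O(n^{\frac{k}{2}})$. Your inductive reduction along adjacent blocks is fine but ultimately unnecessary once that count is in place.
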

\begin{proof}
Let  $\pi \in \mathcal{PP}(k)$ and $l <h(\pi)$. We want to construct $(P_1,\ldots, P_k) \in  B_n^{(l)}(\pi) $. As we assume   $l <h(\pi)$, there are indices $i,j$ that give a contribution to   $h(\pi)$ but the corresponding pairs $P_i,P_j$ do not contribute  to $m(P_1,\ldots, P_k) $, i.e.~there exist $1\leq i<j \leq k$ such that 
\begin{enumerate}
\item[(i)] $i \sim j$,
\item[(ii)] $j=i+1$ or $\pi|_{\{i+1,\ldots,j-1\}}$ is a pair partition,
\item[(iii)] $P_i \neq P_j$ and $P_i \neq \overline{P}_j.$
\end{enumerate}
In particular, $j=i+1$ cannot be satisfied, as this implies $P_i=\overline{P}_j$. Hence, we can assume $j>i+1$, $\pi|_{\{i+1,\ldots,j-1\}}$ is a pair partition (with $(j-i-1)/2$ blocks)
 and, as a consequence of (iii), $p_{i+1} \neq p_j$.
 We observe that  
 $$\# (\pi|_{\{1,\ldots,k\}\setminus\{i+1,\ldots,j-1\}})=\frac{k}{2}-\frac{j-i-1}{2}.$$
Then there are  $n$ possibilities to choose $p_{i+1}$ and by (CP1) we have $n^{\frac{k}{2}-\frac{j-i-1}{2}}$ possibilities to successively choose $p_i,p_{i-1},\ldots, p_1,p_k, \ldots, p_j$.
  Applying (CP2) to choose $p_{i+2}, \ldots, p_{j-1}$ would amount to $n^\frac{j-i-1}{2}$ possibilities, but we claim that there are actually only $Cn^{\frac{j-i-1}{2}-1}$ possibilities.
Recalling that  $ p_{i+1},p_j$ are already known and distinct, we observe that we have  
 \begin{equation}\label{alt_summe}
 0 \neq p_{i+1} -p_j =\sum_{s=1}^{j-i-1}(-1)^{s}(p_{i+s}+p_{i+s+1}).
 \end{equation}
As $\pi|_{\{i+1,\ldots,j-1\}}$ is a pair partition, neglecting their sign, each term $p_{i+s}+p_{i+s+1}$ appears exactly twice in  the alternating sum in (\ref{alt_summe})  and as the sum does not vanish, there are $1\leq \alpha,\beta \leq j-i-1$ with $i+\alpha \sim i+\beta$ and $(-1)^{\alpha}=(-1)^{\beta}$.
Then we have
 \begin{equation}\label{alt_summe:2}
 p_{i+1} -p_j =2(-1)^{\alpha} (p_{i+\alpha} + p_{i+\alpha+1})+ \sum_{s=1,\ldots,j-i-1 \atop s\neq \alpha,\beta} (-1)^{s}(p_{i+s}+p_{i+s+1}).
 \end{equation}
 For each of the $\frac{j-i-1}{2}-1$ blocks $\{r,s\} \subset \{i+1,\ldots,j-1\}\setminus \{ i+\alpha,i+\beta\}$ we assign one of $2n$ possible values to $p_{r}+p_{r+1}$ (and hence to $p_s+p_{s+1}$), amounting to $(2n)^{\frac{j-i-1}{2}-1}$ possibilities. Then the alternating sum in (\ref{alt_summe:2}) is fixed and  as we already know $ p_{i+1},p_j$, we can calculate $(p_{i+\alpha} + p_{i+\alpha+1})$ and hence $p_{i+\beta} + p_{i+\beta+1}$. Knowing $p_{i+1}, p_j$ and all the terms $p_{l}+p_{l+1}$, $l=i+1,\ldots, j-1$, the values of $p_{i+2},\ldots,p_{j-1}$ are uniquely determined. 
 Hence, there was a total of 
 $$n^{\frac{k}{2}-\frac{j-i-1}{2}+1}(2n)^{\frac{j-i-1}{2}-1} =C_{i,j}n^{\frac{k}{2}} $$
possibilities to choose  $(P_1,\ldots, P_k) \in  B_n^{(l)}(\pi)$, where $C_{i,j}$ denotes some constant that may depend on $i$ and $j$ only.
Thus, we have
\begin{equation*}
0 \leq \lim_{n \to \infty} \frac{1}{n^{\frac{k}{2}  + 1}}  \# B_n^{(l)}(\pi) \leq  \lim_{n \to \infty} C_{i,j}\frac{n^{\frac{k}{2}}}{n^{\frac{k}{2}  + 1}} =0,
\end{equation*}
completing the proof.
 \end{proof}
 So far, we showed that 
 \begin{align*}
&\frac{1}{n^{\frac{k}{2}  + 1}} \sum_{\pi \in \mathcal{CPP}(k) } \sum_{(P_1,\ldots,P_k) \in S_n(\pi)} \mathbb E\left[ a_n(P_1) a_n(P_2) \ldots a_n(P_k)\right]\\
=&\frac{1}{n^{\frac{k}{2}  + 1}} \sum_{\pi \in \mathcal{CPP}(k) }   c_n^{\frac{k}{2}-h(\pi)} \# B_n^{(h(\pi))}(\pi) + o(1).
\end{align*}
Observe that we have  by Lemma \ref{lemma_B_n} and Lemma \ref{crossing_prop}
\begin{align*}
&\lim_{n \to \infty}\frac{1}{n^{\frac{k}{2}  + 1}}\# B_n^{(h(\pi))}(\pi)=\lim_{n \to \infty}\frac{1}{n^{\frac{k}{2}  + 1}} \#\left(B_n^{(h(\pi))}(\pi)\cup \left( \bigcup_{l < h(\pi)} B_n^{(l)}(\pi)\right) \right)\\
=&\lim_{n \to \infty}\frac{1}{n^{\frac{k}{2}  + 1}} \# \{ \{(P_1,\ldots,P_k) \in S_n(\pi) : \\
& \quad \quad P_i=P_j \text{ or } P_i=\overline{P}_j, i<j \Rightarrow j=i+1 \text{ or } \pi|_{\{i+1,\ldots, j-1\}} \text{ is a pair partition}\}. \}
\\ = &\lim_{n \to \infty}\frac{1}{n^{\frac{k}{2}  + 1}} \# S_n(\pi)
\end{align*}
In order to state a result about the limit of $\frac{1}{n^{\frac{k}{2}  + 1}}\#S_n(\pi)$, we introduce the notion of Hankel volumes.
\begin{definition}
For a pair partition  $\pi \in \mathcal{PP}(k)$ we consider the set of equations in the $k+1$ variables $x_0,\ldots,x_k \in [0,1]$:
\begin{align*}
x_1 +x_0  &= x_{l_1}+ x_{l_1-1}, \quad \text{ if } 1 \sim l_1 \\
x_2 + x_1 &=  x_{l_2} + x_{l_2-1}, \quad \text{ if } 2 \sim l_2  \\
\ldots \\
x_i + x_{i-1} &=  x_{l_i} + x_{l_i-1}, \quad\, \, \text{ if } i \sim l_i \\
\ldots 
\\
x_k + x_{k-1} &=  x_{l_k} + x_{l_k-1}, \quad  \text{ if } k \sim l_k.
\end{align*}
Note that, as $\pi$ is a pair partition, these are actually only $\frac{k}{2} $ equations. 
If $\pi = \{ \{i_1,j_1\}, \{i_2,j_2\}, \ldots \{ i_{\frac{k}{2} },j_{\frac{k}{2} }\}\}$  with $i_l<j_l,l=1,\ldots, \frac{k}{2} $, we solve the equations for $x_{j_1},\ldots, x_{j_{k/2}}$, determining a cross section of the cube $[0,1]^{\frac{k}{2}+1}$. The volume of this cube is denoted by $p_H(\pi)$.
It is a result of \cite{BDJ} that $p_H(\pi)$ is exactly the limit of $\frac{1}{n^{\frac{k}{2}  + 1}} \# S_n(\pi)$.
\end{definition}
\begin{proposition}[Lemma 4.8 in \cite{BDJ}]
For $k \in \mathbb N$, $\pi \in \mathcal{PP}(k)$ we have
$$\lim_{n \to \infty}\frac{1}{n^{\frac{k}{2}  + 1}} \# S_n(\pi)=p_H(\pi).$$
\end{proposition}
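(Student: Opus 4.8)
The plan is to prove this by a direct lattice-point count; since the statement is quoted as Lemma~4.8 of \cite{BDJ} I only describe the strategy. Writing $P_i=(p_i,p_{i+1})$ with the cyclic convention $p_{k+1}=p_1$, a tuple lies in $S_n(\pi)$ exactly when $(p_1,\dots,p_k)\in\{1,\dots,n\}^k$ satisfies the $\tfrac k2$ linear equalities $p_i+p_{i+1}=p_j+p_{j+1}$, one for each block $\{i,j\}$ of $\pi$, together with the strict inequalities $p_i+p_{i+1}\neq p_j+p_{j+1}$ for all $i\not\sim j$. Let $V_\pi\subseteq\mathbb{R}^k$ denote the (rational) solution space of the equalities. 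Their number is $\tfrac k2$, and counting principle (CP1) already gives $\#S_n(\pi)\le n^{\frac{k}{2}+1}$; since, as shown below, $\#S_n(\pi)$ is of exact order $n^{\dim V_\pi}$ whenever $S_n(\pi)\neq\emptyset$, these two facts force $\dim V_\pi\in\{\tfrac k2,\tfrac k2+1\}$. After rescaling $x_l=p_l/n$ the equalities become precisely the system defining $p_H(\pi)$.

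The first step I would carry out is to discard the strict inequalities. For a fixed pair $i\not\sim j$ there are two possibilities: either the hyperplane $\{x_i+x_{i+1}=x_j+x_{j+1}\}$ contains $V_\pi$, in which case no $\pi$-consistent tuple exists, so $S_n(\pi)=\emptyset$ and $p_H(\pi)=0$ and the assertion is trivial; or that hyperplane meets $V_\pi$ in a proper subspace, whose integer points inside $\{1,\dots,n\}^k$ number $O(n^{\dim V_\pi-1})$. As there are finitely many such pairs, $\#S_n(\pi)$ and $\#\bigl(\mathbb{Z}^k\cap\{1,\dots,n\}^k\cap V_\pi\bigr)$ differ by $O(n^{\dim V_\pi-1})$.

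The second step is the passage from this lattice-point count to a volume. The coefficient vectors of the equalities have entries in $\{-1,0,1\}$, so one may solve for a suitable set of ``dependent'' coordinates using pivots equal to $\pm1$; consequently $\mathbb{Z}^k\cap V_\pi$ projects bijectively onto the standard integer lattice of the complementary $\dim V_\pi$ ``free'' coordinates, and under this projection $V_\pi\cap[0,1]^k$ maps to a polytope $K_\pi\subseteq[0,1]^{\dim V_\pi}$ of positive volume. The standard Riemann-sum estimate then yields $\#\bigl(\mathbb{Z}^k\cap\{1,\dots,n\}^k\cap V_\pi\bigr)=\mathrm{vol}(K_\pi)\,n^{\dim V_\pi}+O(n^{\dim V_\pi-1})$, which in particular confirms the order-of-magnitude claim used above. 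Dividing by $n^{\frac{k}{2}+1}$ and letting $n\to\infty$ gives $0$ when $\dim V_\pi=\tfrac k2$ (and then $p_H(\pi)=0$ as well, the cross-section of $[0,1]^{\frac{k}{2}+1}$ in its definition being lower-dimensional), and $\mathrm{vol}(K_\pi)$ when $\dim V_\pi=\tfrac k2+1$; in the latter case $K_\pi$ is exactly the cross-section appearing in the definition of $p_H(\pi)$.

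The step I expect to be the main obstacle is the bookkeeping that makes these identifications exact: checking that the cyclic identification $p_{k+1}=p_1$ is faithfully encoded in the defining system of $p_H(\pi)$ (so that the dichotomy $\dim V_\pi\in\{\tfrac k2,\tfrac k2+1\}$ corresponds, on the definition side, to the $k+1$ variables $x_0,\dots,x_k$ with $\tfrac k2$ equations among them), and verifying that $\mathbb{Z}^k\cap V_\pi$ carries no arithmetic defect relative to continuous volume --- it is here that the $\pm1$-pivot property of the incidence system enters. Everything else is routine.
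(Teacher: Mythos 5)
The paper itself offers no proof of this proposition --- it is imported verbatim from \cite{BDJ} --- so your attempt has to stand on its own. Your overall scheme (discard the strict inequalities at cost $O(n^{\dim V_\pi-1})$, parametrize $\mathbb{Z}^k\cap V_\pi$ integrally by the free coordinates, which the sequential construction behind (CP1) indeed supplies with $\pm1$ pivots, and pass to a volume by a Riemann sum) is sound and is in the spirit of the argument in \cite{BDJ}.

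The genuine gap sits exactly where you locate ``the main obstacle'', and your proposed resolution of it is incorrect. You assert that when $\dim V_\pi=\tfrac k2$ one also has $p_H(\pi)=0$ because ``the cross-section of $[0,1]^{\frac k2+1}$ in its definition is lower-dimensional''. It is not: the system defining $p_H(\pi)$ lives in the $k+1$ \emph{unidentified} variables $x_0,\dots,x_k$, is triangular in $x_{j_1},\dots,x_{j_{k/2}}$, and its cross-section always contains a neighbourhood of the centre of the cube, hence always has positive $(\tfrac k2+1)$-dimensional volume. Concretely, for $k=4$ and $\pi=\{\{1,3\},\{2,4\}\}$ the cyclic system forces $p_1=p_3$ and $p_2=p_4$, so all four skew-diagonal sums coincide, $S_n(\pi)=\emptyset$ and the limit on the left is $0$, while the displayed system for $p_H(\pi)$ (namely $x_3=x_0+x_1-x_2$, $x_4=2x_2-x_0$) cuts out a full-dimensional polytope of positive volume. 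The proposition is only true because the Hankel volume of \cite{BDJ} carries the closure condition $x_k=x_0$, which is suppressed in the transcription above and which your argument never confronts. The missing ingredient is the parity dichotomy coming from an alternating-sum identity of the type (\ref{alt_summe}): for $k$ even, $x_k-x_0=\sum_{s=1}^k(-1)^s(x_{s-1}+x_s)=\sum_{\{i,j\}\in\pi}\bigl((-1)^i+(-1)^j\bigr)\sigma_{\{i,j\}}$, where $\sigma_{\{i,j\}}$ is the common value of $x_{i-1}+x_i=x_{j-1}+x_j$. Hence $\dim V_\pi=\tfrac k2+1$, and simultaneously $x_k=x_0$ holds identically on the cross-section, precisely when every block of $\pi$ pairs an odd with an even index; otherwise the closure condition is a nontrivial linear constraint that both drops $\dim V_\pi$ to $\tfrac k2$ and reduces the cross-section with $x_k=x_0$ imposed to measure zero. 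Without this lemma the two sides of your dichotomy are not linked, and as written your argument would ``prove'' a false equality for the partition above.
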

Finally, using that under (C2) we have $c_n \to c$ as $n$ tends to infinity, we have shown that 
\begin{align} 
\lim_{n \to \infty}\frac{1}{n} \mathbb E \left[\tr X_n^k \right]
&=C_{\frac{k}{2}}+   \sum_{\pi \in \mathcal{CPP}(k) }     c^{\frac{k}{2}-h(\pi)} p_H(\pi).
\nonumber \\
&=  \sum_{\pi \in \mathcal{PP}(k) }     c^{\frac{k}{2}-h(\pi)} p_H(\pi) =: M_k.
\label{moments}
\end{align}
The second equality is due to the fact that all statements  in subsection \ref{subsec:C1} remain valid for all pair partitions that are not necessarily crossing. 
%
%
\section{The Proofs of Theorem \ref{MainTheorem_2}, Theorem \ref{MainTheorem_1} and  Theorem \ref{theo_limiting_distr}} \label{sec_completing_proofs}
To complete the proof of Theorems \ref{MainTheorem_2} and   \ref{MainTheorem_1}  we need to show that, with probability one, the empirical spectral distribution converges weakly to a non-random limit under (C1) resp.~under (C2). 
Under condition (C1) the limiting measure is Wigner's semi-circle $\mu$ (see (\ref{semi_circ})), which is uniquely determined by its moments: the odd moments vanish and the $2k$-th moment is given by the $k$-th Catalan number $C_k$ (see e.g.~Section 2.1.1 in \cite{anderson}).
If (C2) is valid, we will see that the limiting measure  $v_c$ is determined by its moments
\begin{equation}
\label{moments:vc}
\int x^k dv_c = \begin{cases}  0 & k \text{ odd} \\ M_k &k \text{ even} \end{cases}.
\end{equation}
The fact that these moments uniquely determine $v_c$ can be verified by checking the Carleman condition. 
\begin{remark}\label{Remark_lim_distr}
From (\ref{moments}) and (\ref{moments:vc}) we can already deduce the following facts about the measure $v_c$:
\begin{enumerate}
\item[(i)]
The measure $v_0$ is the semi-circle distribution. Indeed, for $c=0$ we have $M_k=C_{k/2}$ for $k$ even, which are exactly the moments of the semi-circle distribution. 
\item[(ii)]
The measure $v_1$ equals $\gamma_H$, as $M_k=  \sum_{\pi \in \mathcal{PP}(k) }     p_H(\pi)$ for $c=1$, which are exactly the moments of $\gamma_H$ (cf.~\cite{BDJ}).
\item[(iii)]
The measure $v_c$ is symmetric for all $c \in [0,1]$ because all odd moments vanish.
\item[(iv)] For $0<c \leq 1$ the measure $v_c$ has unbounded support. 
Indeed, as a bounded support of $v_c$ would lead to  $M_{2k}\leq C^{2k}$, it suffices to verify 
\begin{equation*}
\limsup_{k \to \infty} (M_{2k})^{\frac{1}{k}} =\infty.
\end{equation*}
The above relation  is a consequence of 
$c^{\frac{k}{2}} \int x^k d\gamma_H(d) \leq M_k$
and (see Proposition A.2 in \cite{BDJ})
$\limsup_{k \to \infty}(\int x^{2k} d\gamma_H(d))^{1/k} =\infty.$
 \end{enumerate}
\end{remark}
We can now prove Theorems \ref{MainTheorem_2} and  \ref{MainTheorem_1}, which read:
\begin{theorem}\text{} Let $\mu$ be Wigner semi-circle given in (\ref{semi_circ}) and let $v_c$ be the measure that is uniquely determined by its moments according to (\ref{moments:vc}).  
\begin{enumerate}
\item[(i)]Under condition (C1), $\mu_n$ converges for $n \to~\infty$ weakly, with probability one, to $\mu$ . 
\item[(ii)]
Under condition (C2), $\mu_n$  converges for $n \to~\infty$ weakly, with probability one, to $v_c$.
\end{enumerate}
\end{theorem}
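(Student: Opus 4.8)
The plan is to promote the convergence of expected moments, already established in Lemma~\ref{Lemma_4} (under (C1)) and in equation~(\ref{moments}) (under (C2)), to weak convergence of $\mu_n$ with probability one. Since in both cases the limiting moment sequence (the $C_{k/2}$ for the semi-circle, resp.\ the $M_k$) satisfies the Carleman condition—for the semi-circle this is classical, and for $v_c$ one checks $\sum_k M_{2k}^{-1/2k}=\infty$ using the explicit combinatorial bound $M_{2k}=\sum_{\pi\in\mathcal{PP}(2k)}c^{k-h(\pi)}p_H(\pi)\le \#\mathcal{PP}(2k)=(2k-1)!!$, which grows slower than $(2k)!$—there is a unique probability measure ($\mu$ resp.\ $v_c$) with these moments. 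Thus it suffices to show (a) $\mathbb{E}\int x^k\,d\mu_n\to \int x^k\,d\mu$ (resp.\ $v_c$), which is done, and (b) $\operatorname{Var}\big(\int x^k\,d\mu_n\big)$ is summable in $n$, so that by Chebyshev and Borel--Cantelli $\int x^k\,d\mu_n\to\int x^k\,d\mu$ almost surely for each fixed $k$; intersecting the countably many almost-sure events over all $k\in\mathbb{N}$ then yields almost-sure convergence of all moments, hence (by the moment method and Carleman) almost-sure weak convergence.

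For step~(b), I would write
\begin{equation*}
\operatorname{Var}\!\left(\tfrac1n\tr X_n^k\right)=\frac{1}{n^{k+2}}\sum_{\substack{p_1,\ldots,p_k\\ p_1',\ldots,p_k'}}\Big(\mathbb{E}\big[a_n(P_1)\cdots a_n(P_k)a_n(P_1')\cdots a_n(P_k')\big]-\mathbb{E}[\cdots]\mathbb{E}[\cdots]\Big),
\end{equation*}
and observe that a term survives only if the two index cycles are \emph{not} independent, i.e.\ some skew-diagonal used in the first cycle coincides with one used in the second. Modeling the joint (in)dependence structure by a partition $\pi$ of $\{1,\ldots,2k\}$ as in Section~\ref{sec_combinatoric}, the non-cancelling terms force $\pi$ to have at least one block linking the two halves, which removes one free parameter relative to the naive count. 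Repeating the counting-principle arguments (CP1), (CP2) and the estimates behind (\ref{partition:2})—now for cycles of total length $2k$ with a connectivity constraint—gives $\operatorname{Var}(\tfrac1n\tr X_n^k)=O(n^{-2})$, which is summable; under (C1) the covariance decay (ii) is used exactly as in Lemma~\ref{Lemma_4} to kill the extra crossing terms, and under (C2) the boundedness of $c_n$ and Lemma~\ref{crossing_prop} suffice. I would also invoke (A2) and H\"older to bound each expectation by $m_{2k}$ uniformly, so the $O(n^{-2})$ bound has a constant depending only on $k$.

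The main obstacle is the careful bookkeeping in step~(b): one must check that the ``linking block'' between the two halves genuinely costs a factor $n^{-1}$ and is not offset by some configuration that gains a free index elsewhere (the analogue of the subtlety that forced Lemma~\ref{lemma_B_n} to be an $o(1)$ statement rather than an exact identity). Concretely, after fixing the linking block one has two cycles each of which, by the argument for (\ref{partition:2}), contributes at most $n^{\#\text{blocks}+1}$, but the shared block is counted once, yielding $n^{(\frac{k}{2}+1)+(\frac{k}{2}+1)-1}=n^{k+1}$ against the normalization $n^{k+2}$; one must verify that the genuinely dangerous case—where both half-cycles are built from non-crossing pair partitions and the linking is ``adjacent''—still loses the needed power because identifying a skew-diagonal across the two halves pins down a value that was previously free. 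Once this is in place, the remainder is the standard moment-method/Carleman wrap-up, and the extra structural results for $v_c$ (symmetry, unbounded support for $c>0$, smoothness for $c<1$) are recorded separately in Remark~\ref{Remark_lim_distr} and Theorem~\ref{theo_limiting_distr} and are not needed here.
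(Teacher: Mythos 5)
Your scaffolding (moment method, Carleman condition for uniqueness of $\mu$ and $v_c$, Chebyshev plus Borel--Cantelli for each fixed $k$, then intersecting countably many almost-sure events) is exactly the paper's, and your bound $M_{2k}\le(2k-1)!!$ does verify Carleman. The gap is in step (b): the claim $\mathbb V\bigl(\tfrac1n\tr X_n^k\bigr)=O(n^{-2})$ is false for these ensembles, and your own count already betrays this --- you arrive at $n^{k+1}$ surviving index pairs against the normalization $n^{k+2}$, which is $O(n^{-1})$, not $O(n^{-2})$, and $\sum_n n^{-1}$ diverges, so Borel--Cantelli does not apply. The missing power of $n$ cannot be recovered: unlike the Wigner case, where a shared entry pins down two vertices of the second cycle, a shared skew-diagonal imposes only the single linear relation $p_i+p_{i+1}=p_{j}'+p_{j+1}'$, so the second cycle keeps a free starting index. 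The loss is already visible for $k=2$ in the Hankel case ($c=1$, an instance of (C2)): here $\tfrac1n\tr X_n^2=n^{-2}\sum_{p,q}a_n(p,q)^2$, the covariance $\text{Cov}\bigl(a_n(p,q)^2,a_n(p',q')^2\bigr)$ is nonzero (and bounded below) precisely on the $\asymp n^3$ quadruples with $p+q=p'+q'$, and hence $\mathbb V\bigl(\tfrac1n\tr X_n^2\bigr)\asymp n^{-1}$. This is consistent with the known central limit theorems for linear eigenvalue statistics of Toeplitz and Hankel matrices, whose fluctuations are of order $\sqrt n$ rather than $O(1)$.

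The paper avoids this by replacing the variance with the \emph{fourth} centered moment: the concentration inequality $\mathbb E\bigl[(\tr X_n^k-\mathbb E\tr X_n^k)^4\bigr]\le Cn^2$ (Proposition 4.9 of \cite{BDJ}, extended to independent skew-diagonals as in Lemma 3.5 of \cite{FL1}) gives, via Chebyshev with the fourth power,
\begin{equation*}
\mathbb P\left( \left|\tfrac1n\tr X_n^k-\mathbb E\bigl(\tfrac1n\tr X_n^k\bigr)\right|>\varepsilon\right)\le \frac{C}{\varepsilon^4 n^2},
\end{equation*}
which is summable, and the rest of your wrap-up then goes through verbatim. So the strategy is salvageable, but you must substitute the fourth-moment estimate for the variance computation; the second moment alone is provably insufficient under (C2) with $c>0$.
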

\begin{proof}
We will use the concentration inequality obtained in Proposition 4.9 in \cite{BDJ}, which can easily be extended to the case of matrices with independent skew-diagonals analogue to Lemma 3.5 in \cite{FL1}. Hence, we have under both conditions (C1) and (C2)
\begin{equation}
\label{conc_ineq}
\mathbb E \left[ \left( \tr(X_n^k)-\mathbb E(\tr(X_n^k))^4 \right) \right] \leq C n^2, \quad \forall k\in\mathbb N.
\end{equation}
As the limiting distributions $\mu$ and $v_c$ are uniquely determined by their moments, it suffices to show 
\begin{equation*}
 \int x^k d\mu_n(X_n)  = \frac{1}{n}  \tr (X_n^k) \to \mathbb E[Y^k], \quad n\to \infty\quad \text{ almost surely},
\end{equation*}
where $Y$ denotes a random variable distributed according to $v_c$ if we consider (C2) and according to $\mu$ if we consider (C1).
By Chebyshev's inequality and (\ref{conc_ineq}) we have  for $\varepsilon >0, k,n \in \mathbb N$
\begin{equation*}
\mathbb P \left( \left|  \frac{1}{n}  \tr (X_n^k) - \mathbb E \left(  \frac{1}{n}  \tr (X_n^k)\right) \right|> \varepsilon \right) \leq \frac{C}{\varepsilon^4 n^2}.
\end{equation*}
By the Borel-Cantelli Lemma we obtain 
\begin{equation*}
\frac{1}{n}  \tr (X_n^k)-\mathbb E \left(  \frac{1}{n}  \tr (X_n^k)\right) \to 0 \quad n\to\infty  \quad \text{ almost surely}.
\end{equation*}
Since it is the result of the previous sections that 
\begin{equation*}
\mathbb E \left(  \frac{1}{n}  \tr (X_n^k)\right) \to \mathbb E[Y^k], \quad n\to\infty,
\end{equation*}
this completes the proof.
\end{proof}
It remains to prove the claims about the limiting measure $v_c$, which are listed in Theorem \ref{theo_limiting_distr}. Some of those are already stated in Remark \ref{Remark_lim_distr} and it suffices to show the following Lemma.
\begin{lemma}[cf.~Lemma 6.2 in \cite{FL2}]
With the notation of Theorem \ref{theo_limiting_distr} we have
$$v_c=v_{0,1-c} \boxplus v_{1,c}, \quad 0 \leq c \leq 1.$$
Moreover, $v_c$ has a smooth density if $0 \leq  c < 1$.
\end{lemma}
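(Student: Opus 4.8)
The plan is to identify the moments $M_k$ in (\ref{moments}) as the moments of the free convolution $v_{0,1-c}\boxplus v_{1,c}$ and then to derive the smoothness of the density from this identification. First I would recall from free probability (as in \cite{NicaSpeicher}) that if $v_{0,1-c}$ is the semi-circle of variance $1-c$ and $v_{1,c}$ is $\gamma_H$ rescaled to variance $c$, then the moments of $v_{0,1-c}\boxplus v_{1,c}$ are computed by summing over all pair partitions $\pi\in\mathcal{PP}(k)$ a product that splits the blocks of $\pi$ according to whether they are ``attributed'' to the semi-circle part or to the Hankel part: a block contributes a factor associated with $v_{0,1-c}$ if it is non-crossed in the sense of nesting (i.e.\ if restricting $\pi$ to the interval strictly between the two legs yields a pair partition, which is exactly the condition counted by $h(\pi)$), and a factor associated with $v_{1,c}$ otherwise. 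Concretely, the free cumulants of the semi-circle are supported on pairs only, and combining the free cumulant expansions of the two summands, the $k$-th moment of the free convolution becomes $\sum_{\pi\in\mathcal{PP}(k)} (1-c)^{h(\pi)} c^{k/2-h(\pi)}\, p_H(\pi)$ after one checks that the Hankel volume $p_H(\pi)$ is precisely what records the surviving combinatorial weight of the ``Hankel'' blocks while the ``semi-circle'' blocks each contribute volume one. Matching this with $M_k$ in (\ref{moments}) then requires a binomial-type identity absorbing the factor $(1-c)^{h(\pi)}$: expanding $(1-c)^{h(\pi)}=\sum_{j}\binom{h(\pi)}{j}(-c)^j$ and reorganising the sum over sub-partitions should collapse to $\sum_{\pi}c^{k/2-h(\pi)}p_H(\pi)$, which is exactly the content already extracted from the combinatorics in Section \ref{sec_kthmoment}. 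This bookkeeping identity is where I expect the main difficulty to lie, and it is the step I would carry out most carefully; it is essentially the skew-diagonal analogue of Lemma 6.2 in \cite{FL2}, so I would follow that argument, replacing the Toeplitz volumes and the Toeplitz-specific notion of ``height'' by the Hankel volumes $p_H(\pi)$ and the height $h(\pi)$ defined above, and checking that the substitution is consistent at each step.

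Once the moment identity $M_k=\int x^k\,d(v_{0,1-c}\boxplus v_{1,c})$ is established for all even $k$ (the odd moments vanishing on both sides by symmetry), the equality $v_c=v_{0,1-c}\boxplus v_{1,c}$ follows because, as noted after (\ref{moments:vc}), $v_c$ is determined by its moments via the Carleman condition; since the free convolution of a compactly supported measure with $\gamma_H$ rescaled also has moments growing slowly enough to be determined by them (the relevant moment bounds come from \cite{BDJ}), both measures are the unique measure with moment sequence $(M_k)$.

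For the smoothness claim when $0\le c<1$, I would use that $v_{0,1-c}$ is, for $c<1$, a genuine semi-circle of strictly positive variance $1-c$, and invoke the regularising effect of free convolution with a semi-circular element: the free convolution $\nu\boxplus v_{0,1-c}$ of \emph{any} probability measure $\nu$ with a semi-circle of variance $t>0$ has a density that is real-analytic wherever it is positive, and in fact the map $t\mapsto \nu\boxplus(\text{semi-circle of variance }t)$ is the free analogue of the heat semigroup; this is a classical result of Biane on the regularity of free convolution with the semi-circle law. Taking $\nu=v_{1,c}$ and $t=1-c>0$ gives that $v_c$ has a density, and that this density is smooth (indeed analytic on the interior of its support, which by Remark \ref{Remark_lim_distr}(iv) is all of $\mathbb{R}$ when $c>0$ and is an interval when $c=0$). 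I would cite Biane's theorem (available in \cite{NicaSpeicher} or the original references therein) for this, exactly as \cite{FL2} does in the Toeplitz setting; no new argument beyond the identification of $v_c$ as a semi-circular free convolution is needed here, so the only genuinely new work is the combinatorial moment matching in the first paragraph.
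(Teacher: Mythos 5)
Your second and third paragraphs (moment determinacy via Carleman, and smoothness via Biane's regularity theorem for free convolution with a semi-circular element) are fine and match what the paper does. The problem is in the first paragraph, which contains the only genuinely new content, and there the argument as stated cannot work. Your claimed formula for the moments of $v_{0,1-c}\boxplus v_{1,c}$, namely $\sum_{\pi\in\mathcal{PP}(k)}(1-c)^{h(\pi)}c^{k/2-h(\pi)}p_H(\pi)$, is already false at $k=2$: the unique pair partition has $h(\pi)=1$ and $p_H(\pi)=1$, so your expression gives $1-c$, whereas the second moment of the free convolution is $(1-c)+c=1=M_2$. The error is the deterministic attribution of blocks: in the free moment--cumulant expansion $m_k=\sum_{\sigma\in NC(k)}\prod_{V\in\sigma}\bigl(\kappa_{|V|}(v_{0,1-c})+\kappa_{|V|}(v_{1,c})\bigr)$ one must expand the product, i.e.\ sum over \emph{all} assignments of blocks to the two summands, rather than forcing a block to belong to the semi-circle part exactly when it satisfies the nesting condition counted by $h(\pi)$. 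Consequently the ``binomial-type identity'' you hope will absorb the factor $(1-c)^{h(\pi)}$ does not exist as stated (no rearrangement turns $1-c$ into $1$), and since you explicitly defer this step (``should collapse'', ``where I expect the main difficulty to lie''), the key identification $M_k=\int x^k\,d(v_{0,1-c}\boxplus v_{1,c})$ is not established.

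The paper sidesteps this by working with free cumulants instead of moments: since $\boxplus$ is additive on free cumulants and dilation by $\sqrt{t}$ scales $\kappa_{2k}$ by $t^k$, the whole claim reduces to the single identity
\begin{equation*}
\kappa_{2k}(v_c)=(1-c)^k\kappa_{2k}(\mu)+c^k\kappa_{2k}(\gamma_H),
\end{equation*}
which is extracted from the moment formula (\ref{moments}) exactly as in Lemma 6.2 of \cite{FL2}, using Lemma A.4 of \cite{BDJ} (with $p_T$ replaced by $p_H$). If you want to repair your argument, the cleanest route is to redo the first paragraph at the level of cumulants: apply the M\"obius inversion to $M_k=\sum_{\pi}c^{k/2-h(\pi)}p_H(\pi)$ and show that the blocks counted by $h(\pi)$ are precisely those whose removal factorizes $p_H$, so that each such block contributes an additive $\kappa_2$-term of size $1$ split as $(1-c)+c$, while the residual crossing structure reproduces $c^k\kappa_{2k}(\gamma_H)$.
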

\begin{proof}
 Recall that  $v_{0,1-c} $ denotes the rescaled semi-circle  with variance $1-c$ and  $v_{1,c}$ the rescaled Hankel distribution $\gamma_H$ with variance c as derived in \cite{BDJ}. It suffices to show that the
free cumulants of the free convolution of $v_{0,1-c} $ and  $v_{1,c}$  coincide with the
free cumulants of $v_c$.
We apply the same arguments as in  Lemma 6.2 in \cite{FL2} (only replacing $p_T$ by $p_H$), that rely on the results of  Lemma A.4 in \cite{BDJ} (see also p. 152 in \cite{Speicher}). 
Similarly, we conclude
$$(1-c)^k \kappa_{2k}(\mu) + c^k \kappa_{2k}(\gamma_H)=\kappa_{2k}(v_c),$$
proving 
$$v_c=v_{0,1-c} \boxplus v_{1,c}, \quad 0 \leq c \leq 1.$$ 
Using general results about the free convolution with the semi-circle distribution provided in \cite{biane}, we obtain that $v_c$ has a smooth density for $0\leq c<1$. 
\end{proof}
\begin{remark}
The  boundedness of the density of $v_c$ is not derived here as   the boundedness of $\gamma_H$ is not yet 
 available in the literature. Note that 
for matrices with independent diagonals  the boundedness of the corresponding density  could be derived in \cite{FL2}, using \cite{biane} and the boundedness of $\gamma_T$ \cite{virag}.
\end{remark}
As already noted, our line of arguments follows  \cite{FL2,FL1} and in the following concluding remark we comment on the differences between  the proofs.
\begin{remark}\label{remark_diff_toeplitzcase}
The main difference between the ensembles considered here and those considered in \cite{FL2,FL1}  is that instead of (\ref{equivalence_relation}) the dependence structure of the matrix entries in \cite{FL2,FL1} is given by 
\begin{equation*}  
|p_i-q_i| = |p_j-q_j|\quad \Leftrightarrow \quad  i\sim j.
\end{equation*}
Hence, the validity of all arguments from \cite{FL2,FL1}  has to be verified for  (\ref{equivalence_relation}).

The proof of Theorem \ref{MainTheorem_2} follows the corresponding proofs in \cite{FL1}.  Here, we do not introduce the sets $S_n^*(\pi) \subset S_n(\pi)$ (there is actually no natural way to do this in our setting),  and all needed relations have to be derived from (\ref{equivalence_relation}) directly. The lack of $S_n^*$ requires the distinction of positive and negative pairs in the proof of Lemma \ref{Lemma_4}.

The proof of Theorem \ref{MainTheorem_1} (corresponding to \cite{FL2}) requires more modifications. Again, we do not introduce the sets $S_n^*(\pi)$, but in this case the implications are more severe. The most prominent one is that the analogue of  \cite[Lemma 5.5]{FL1} is not valid  and it has to be replaced by Lemma  \ref{lemma_B_n} of this paper (i.e.~$\# B_n^{(l)}(\pi)$ is not necessarily zero for all $n$ but vanishes in the limit  $n \to \infty$), which required new ideas. Moreover, in Section \ref{subsec:C1} we have to additionally consider the pairs $\overline{P}_j$ in the definition of $S_n(\pi, i,j)$ in Lemma \ref{crossing_prop}, in (\ref{Darst_E}) and in all derived terms such as $m(P_1,\ldots,P_k)$ and $B_n^{(l)}(\pi)$. Hence, we have to  verify that the required estimates remain valid. 

In both cases, the extension from the  convergence of the expected $k$-th moment to the almost sure convergence and the proof of Theorem \ref{theo_limiting_distr} can  be carried out analogously to \cite{FL2,FL1}. 
\end{remark}

\section*{Acknowledgments}

The author would like to thank Matthias L\"owe for drawing her attention to
the subject of this paper and for many fruitful discussions.

\bibliographystyle{plain}
\bibliography{bibliography}

\end{document}